\makeatletter\@addtoreset{equation}{section}\makeatother
\makeatletter\@addtoreset{table}{section}\makeatother
\newtheorem{conjecture}{Conjecture}[section]
\newtheorem{theorem}{Theorem}[section]
\newtheorem{prop}[theorem]{Proposition}
\newtheorem{lemma}[theorem]{Lemma}
\newtheorem{cor}[theorem]{Corollary}
\newenvironment{question}{\refstepcounter{theorem}\par\medskip\noindent{\bf Question~\thetheorem~~}}{\unskip\nobreak\hfill\hbox{ $\oslash$}\par\bigskip}
\newenvironment{example}{\refstepcounter{theorem}\par\medskip\noindent{\bf Example~\thetheorem~~}}{\unskip\nobreak\hfill\hbox{ $\oslash$}\par\bigskip}
\newenvironment{definition}{\refstepcounter{theorem}\par\medskip\noindent{\bf Definition~\thetheorem~~}}{\unskip\nobreak\hfill\hbox{ $\oslash$}\par\bigskip}
\begin{document}

\title{Topology of spaces\linebreak[1] of equivariant symplectic embeddings}
\date{}
\author{Alvaro Pelayo}

\maketitle

\begin{abstract}
We compute the homotopy type of the space of
$\mathbb{T}^n$\--equivariant symplectic embeddings from the standard
$2n$\--dimensional ball of some fixed radius into a $2n$\--dimensional
symplectic--toric manifold $(M, \, \sigma)$, and use this computation to define a
$\mathbb{Z}_{\ge 0}$\--valued step function on $\mathbb{R}_{\ge 0}$ which is an
\emph{invariant} of the symplectic--toric type of $(M, \, \sigma)$.
We conclude with a discussion of the partially equivariant case of this result.
\end{abstract}

\section{The main theorem}
Let $(M,\, \sigma)$ be a $2n$\--dimensional symplectic manifold and write
$\mathbb{B}_r$ for the compact $2n$\--ball of radius $r>0$ in the complex space
$\mathbb{C}^n$ equipped with the restriction of the standard symplectic form 
$\sigma_0$ of $\mathbb{C}^{n}$. (The proofs of the results in this paper hold
verbatim for the open ball.)
Recently a lot of effort has been put into understanding the topological and geometric properties 
of the space of symplectic embeddings from $\mathbb{B}_r$ into $M$.
This question is not only intriguing, but it is also very fundamental 
because it acknowledges one of the main differences that exist between Riemannian
and symplectic geometry, e.g. Gromov's non--squeezing theorem \cite{G}. 

\begin{figure}[htb]
\begin{center}
\includegraphics{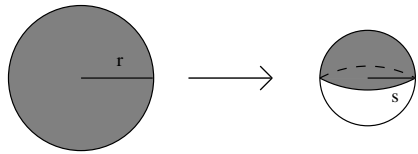}
\caption{An equivariant and symplectic embedding 
$\mathbb{B}^2_{r} \to \mathbb{S}^2_s$ with $r/s=\sqrt{2}$.}\label{AF5}
\end{center}
\end{figure}

This question, posed with such generality, has proven to be extremely difficult to answer.
Significant progress has been made by McDuff \cite{M1}, \cite{M2},
Biran \cite{B1}, \cite{B3}
and most recently by Lalonde--Pinsonnault \cite{LP}, among other authors. One of the most
general results is due to McDuff; she showed the connectedness of the space
of 4\--balls into 4\--manifolds with non-simple Seiberg--Witten type,
in particular rational or ruled surfaces. Recall that 
we say that a symplectic $4$\--manifold $Q$ has \emph{simple 
Seiberg--Witten type} or just \emph{simple type} 
if the only non--zero Gromov invariants of $Q$ occur 
in classes $A \in \textup{H}_2(Q)$ for which $k(A) = K \cdot A + A ^{2} = 0$.  
It follows from work of Taubes and Li--Liu that the symplectic $4$\--manifolds with 
non--simple type are blow--ups of (i) rational and ruled manifolds; 
(ii) manifolds with $\textup{b}_1 = 0$, $\textup{b} _2^{+} = 1$, like the Enriques or Barlow surface;
and (iii) manifolds with $\textup{b}_1 = 2$ and $(\textup{H}_1(X))^2\neq 0$;
examples (with $K=0$) are hyperelliptic surfaces, some 
non--K\"ahler $\mathbb{T}^2$\--bundles over $\mathbb{T}^2$ and
quotients $\mathbb{T}^{2} \times \Sigma_g / G$ where $\Sigma_g$  
is a surface of genus $g$ greater than $1$, and $G$ is certain finite group. 
See \cite{M1} for further references and examples.

McDuff's techniques are unique
to dimension 4 and do not extend at all to higher dimensions---this is also
the case in the other authors' work---existence of $J$\--holomorphic
curves with special homological properties is essential in their proofs.
Although $J$\--holomorphic curves exist in all even dimensions, it is
only in dimension $4$ where these homological properties hold. 

In the present paper we study a special case of this question: $M$ is a symplectic--toric
manifold of arbitrary dimension, and the symplectic embeddings that we consider
preserve the toric structure, see Figure 1. Precisely this means that there exists
an automorphism $\Lambda$ of the $n$\--torus $\mathbb{T}^n$ such that the following diagram commutes:
\begin{eqnarray} \label{mainlemma}
\xymatrix{ \ar @{} [dr] |{\circlearrowleft}
\mathbb{T}^{n} \times \mathbb{B}_r  \ar[r]^{\Lambda \times f}      \ar[d]^{  {\cdot} }  &  \mathbb{T}^{n} \times M 
                  \ar[d]^{\psi}   \\
                   \mathbb{B}_r  \ar[r]^f   &       M },
\end{eqnarray} 
where $\psi$ is a fixed effective and Hamiltonian $\mathbb{T}^n$\--action on $M$
and $ {\cdot}$ denotes the standard action by rotations
on $\mathbb{B}_r$ (component by component). In this case we say
that $f$ is a $\Lambda$\--\emph{equivariant mapping}.

\begin{figure}[htb]
\begin{center}
\includegraphics{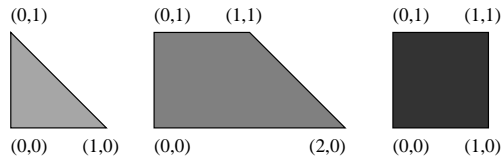}
\caption{The momentum polytope of $\mathbb{CP}^2$ and $\mathbb{B}^2_1$ (left),
of a Hirzebruch surface (center) and of $(\mathbb{CP}^1)^2$ (right).}
\end{center}
\label{AF3}
\end{figure}

The feature that makes the study of symplectic manifolds equipped with Hamiltonian
torus actions richer than the study of generic symplectic manifolds is the presence of the
smooth momentum map
$
\mu^{M} \colon M \to \mbox{Lie}(\mathbb{T}^n)^*,
$
whose image $\Delta^M$ is a convex polytope (called the \emph{momentum polytope
of $M$}, cf. Figure 2) as shown independently by Atiyah and Guillemin--Sternberg \cite{A}, 
\cite{G1}. Here we are
identifying the Lie algebra $\mbox{Lie}(\mathbb{T}^n)$ and its dual 
$\mbox{Lie}(\mathbb{T}^n)^*$ with $\mathbb{R}^n$.
Since this identification is not canonical, we need to specify 
the convention we adopt in this paper. This amounts to choosing
an epimorphism $ \mathbb{R} \to \mathbb{T}^1$ which we take to be
$x \mapsto e^{2\, \sqrt{-1}x}$. This epimorphism induces an isomorphism
between $\mbox{Lie}(\mathbb{T}^1)$ and $\mathbb{R}$ via
$\frac{\partial}{\partial x} \mapsto 1/2$, giving rise to a new isomorphism
$\mbox{Lie}(\mathbb{T}^n) \to \mathbb{R}^n$, $\frac{\partial}{\partial x_k} \mapsto 1/2 \, {e}_k$, 
by canonically identifying $\mbox{Lie}(\mathbb{T}^n)$ with the product of $n$ copies
of $\mbox{Lie}(\mathbb{T}^1)$ (see \cite{G2} for more details). 

For example, under the convention of the previous paragraph, 
the momentum map $\mu^{\mathbb{B}_r}$ of $\mathbb{B}_r$ is a mapping from
$\mathbb{B}_r$ into $\mathbb{R}^n$ with components 
$
\mu_k^{\mathbb{B}_r}( {z})=|z_k|^2$, for all 
integer $k$ with $1 \le k \le n$. (There
are a number of different conventions used in the literature, and our
choice is intended to give the \emph{simplest} formula for the momentum
map of $\mathbb{B}_r$.)
The simplest symplectic manifolds which admit Hamiltonian
effective torus actions are called symplectic--toric.

\begin{definition}
A \emph{symplectic--toric manifold} $M$,
also called a \emph{Delzant manifold},
is a compact connected symplectic manifold equipped 
with an effective Hamiltonian action of a torus of dimension 
half of the dimension of the manifold. 
In this case the momentum 
polytope $\Delta^{M}$ is called the \emph{Delzant polytope of} $M$.
\end{definition}

Symplectic--toric
manifolds were classified
by Delzant in \cite{D}. In particular, he showed 
that the momentum image of such a manifold
under the momentum map completely determines $M$
up to equivariant symplectomorphisms.

The main result
of this paper, Theorem \ref{maintheorem} below, describes the topology of the 
space of equivariant embeddings of symplectic balls into a symplectic--toric manifold.
We denote by $\chi(M)$ the Euler characteristic of $M$.

\begin{theorem} \label{maintheorem}
For every symplectic--toric $2n$\--manifold $M$ there is an associated
$\mathbb{Z}$\--valued non--increasing step function 
$\textup{Emb}_{(M,\,\sigma)} \colon \mathbb{R}_{\ge 0} \to [0,\,n! \, \chi(M)]$
such that for each $r \ge 0$ the space of equivariant symplectic embeddings from 
the $2n$\--ball $\mathbb{B}_r$ into $M$ is homotopically
equivalent to a disjoint union of $\textup{Emb}_{(M,\,\sigma)}(r)$ subspaces, each of which is homeomorphic to the 
$n$\--torus $\mathbb{T}^n$. 
\end{theorem}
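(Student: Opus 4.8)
The plan is to reduce the analysis of equivariant symplectic embeddings to a finite combinatorial count over the vertices of the Delzant polytope $\Delta^M$, together with a local linearization argument that identifies each connected component with the torus $\mathbb{T}^n$. The starting observation is that $0 \in \mathbb{B}_r$ is the unique fixed point of the rotation action, so for any $\Lambda$\--equivariant embedding $f$ the point $f(0)$ must be fixed by $\psi$. Since $\psi$ is effective and Hamiltonian, its fixed points are exactly the preimages under $\mu^M$ of the vertices of $\Delta^M$, and by the localization of the Euler characteristic there are precisely $\chi(M)$ of them. Moreover, the commuting diagram forces the relation $\mu^M \circ f = A_\Lambda \circ \mu^{\mathbb{B}_r} + \mu^M(f(0))$, where $A_\Lambda$ is the linear isomorphism of $\mathbb{R}^n$ dual to $\Lambda$; since $\mu^{\mathbb{B}_r}(\mathbb{B}_r)$ is the simplex $\Delta_r = \{x \ge 0,\ \sum_k x_k \le r^2\}$, the momentum image of $f(\mathbb{B}_r)$ is an affine copy of $\Delta_r$ with its apex pinned at the chosen vertex.

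First I would establish that $\Lambda$ is a discrete invariant of $f$: effectiveness of $\psi$ shows that $\Lambda$ is uniquely determined by $f$, hence locally constant on the embedding space, so distinct $\Lambda$ label distinct connected components. Next, using the Delzant condition that the $n$ edges emanating from a vertex $v$ form a $\mathbb{Z}$\--basis $w_1,\dots,w_n$ of the weight lattice, I would show that the admissible automorphisms are exactly those $\Lambda_\sigma \in GL(n,\mathbb{Z})$ carrying the standard ball weights $e_1,\dots,e_n$ to $w_{\sigma(1)},\dots,w_{\sigma(n)}$; there are $n!$ such choices at each vertex, giving at most $n!\,\chi(M)$ components in total. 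Because $\Delta_r$ is symmetric under permuting coordinates, the momentum image $\mathrm{conv}\{v,\, v+r^2 w_1,\dots,\, v+r^2 w_n\}$ is in fact independent of $\sigma$, and it is contained in $\Delta^M$ precisely when $r^2$ does not exceed an explicit threshold $\ell_v$ depending only on $v$. As $r$ grows past each $\ell_v$, the corresponding components disappear, which exhibits $\mathrm{Emb}_{(M,\,\sigma)}(r)$ as a non\--increasing, integer\--valued step function bounded by $n!\,\chi(M)$.

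It then remains to compute the homotopy type of a single non\--empty component, i.e.\ the space $\mathcal{E}_{v,\sigma}$ of equivariant symplectic embeddings with $f(0)=v$ and twist $\Lambda_\sigma$. Here I would invoke the local normal form for symplectic\--toric manifolds: the preimage under $\mu^M$ of the open corner at $v$ is equivariantly symplectomorphic to a standard region in $(\mathbb{C}^n,\sigma_0)$ carrying the diagonal rotation action, and the momentum constraint above forces $f(\mathbb{B}_r)$ into this region. In the model the linearization $L = \mathrm{d}f_0$ is a $\mathbb{C}$\--linear, diagonal, equivariant symplectic map, hence a diagonal unitary, so the subspace of such linear embeddings is faithfully parametrized by $\theta \in \mathbb{T}^n$ and is homeomorphic to $\mathbb{T}^n$. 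The Alexander rescaling $f_t(z) = t^{-1} f(t z)$, $t\in(0,1]$, is equivariant and symplectic (one checks $f_t^*\sigma_0 = \sigma_0$) and deformation retracts $\mathcal{E}_{v,\sigma}$ onto this torus of diagonal unitaries; the full embedding space is therefore homotopy equivalent to the disjoint union of these tori, one for each active pair $(v,\sigma)$.

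The main obstacle is precisely this linearization/retraction step. One must verify that an arbitrary equivariant symplectic embedding can be connected through equivariant symplectic embeddings to its linear part \emph{without the image escaping the corner model}, and that the resulting retraction is continuous in the $C^\infty$ topology on the embedding space. This demands a careful equivariant Moser/Alexander\--trick argument and control of the size of the normal\--form neighborhood, ensuring that each rescaled $f_t$ remains a genuine embedding into $(\mu^M)^{-1}(\text{corner})$ for all $t$. The momentum\--map rigidity established in the second paragraph is exactly what keeps the image trapped in the standard region throughout the isotopy, so that the local model suffices and the homeomorphism type $\mathbb{T}^n$ is attained on the nose.
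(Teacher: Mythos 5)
Your proposal is correct and takes essentially the same route as the paper: equivariance forces $f(0)$ to a fixed point and rigidifies the momentum image, a size-controlled corner model identifies the relevant piece of $M$ with a standard region of $(\mathbb{C}^n,\sigma_0)$, and Alexander's trick retracts each component onto a torus, yielding $n!$ copies of $\mathbb{T}^n$ per admissible vertex. Note only that the ``local normal form with size control'' you invoke is precisely the Karshon--Tolman result on centered proper Hamiltonian $\mathbb{T}^n$\--manifolds (Lemma~\ref{KTlemma}, used in the paper's Lemma~\ref{ss}), and that the paper sidesteps your rescaling-inside-the-model concern by instead fixing one embedding $u$ and applying Alexander's trick to $u^{-1}\circ h$ inside the group of equivariant symplectomorphisms of the ball.
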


As a matter of fact we can explicitly and easily read $\textup{Emb}_{(M,\,\sigma)}$ from the polytope $\Delta^M$:

\begin{example}
Let $(M, \, \sigma)$ equal the blow--up of $\mathbb{S}^2_{r_0} \times \mathbb{S}^2_{r_0}$
with $r_0=1/\sqrt{2}$ whose Delzant
polytope has vertices at $(0,\, 0)$, $(2, \, 0)$, $(2, \, 1)$, $(1, \, 2)$ and $(0, \, 2)$ \textup{(}see \textup{Figure 4}\textup{)}.
Then $\textup{Emb}_{(M, \, \sigma)}=10 \, \chi_{[0,1)} + 2 \, \chi_{[1,\sqrt{2})}$, where $\chi_A$ denotes the 
characteristic function of $A \subset \mathbb{R}$. We 
identify the $2$\--sphere of radius $r$ equipped with the standard area form
with $(\mathbb{CP}^1,\, 4r^2 \cdot \sigma_{\textup{FS}})$,
where $\sigma_{\textup{FS}}$ is the Fubini--Study form.
\end{example}

\begin{prop}\label{maintheorem2}
The function $\textup{Emb}_{(M,\,\sigma)}(r)$ given in \textup{Theorem \ref{maintheorem}} is an invariant of the symplectic--toric
type of $M$ and is given by the formula 
\begin{eqnarray} \label{k}
\textup{Emb}_{(M,\,\sigma)}(r)= n! \, \sum_{  {p} \in M^{\mathbb{T}^n} } \textup{c}_{ {p}}(r),
\end{eqnarray}
where for each fixed point $ {p} \in M$, $\textup{c}_{ {p}}(r)=1$ if 
the infimum of the $\textup{SL}(n, \, \mathbb{Z})$\--lengths of the edges of $\Delta^M$ meeting at 
$\mu^{M}( {p})$ is strictly greater than $r^2$, and $\textup{c}_{ {p}}(r)=0$ otherwise. 
\end{prop}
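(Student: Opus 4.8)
The plan is to combine Theorem \ref{maintheorem}, which already identifies the space of $\mathbb{T}^n$\--equivariant symplectic embeddings with a disjoint union of $\textup{Emb}_{(M,\,\sigma)}(r)$ copies of $\mathbb{T}^n$, with Delzant's classification, so that the problem reduces to \emph{counting} the path components and exhibiting the count as the right\--hand side of \eqref{k}. First I would observe that the standard rotation action on $\mathbb{B}_r$ has the origin as its unique fixed point, so any $\Lambda$\--equivariant embedding $f$ must carry $0$ into $M^{\mathbb{T}^n}$; since $M$ is symplectic\--toric the fixed\--point set is finite and in bijection with the vertices of $\Delta^M$, and $\chi(M)=|M^{\mathbb{T}^n}|$ equals the number of vertices. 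This attaches to each embedding a well\--defined vertex $\mu^M(p)$, the image of the origin, which is locally constant and hence constant on each torus component.

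Next I would localize near a fixed point $p$. By the equivariant Darboux theorem, a $\mathbb{T}^n$\--invariant neighborhood of $p$ is equivariantly symplectomorphic to a neighborhood of $0$ in $(\mathbb{C}^n,\,\sigma_0)$ on which $\mathbb{T}^n$ acts linearly with weights the primitive integral directions $v_1,\dots,v_n$ of the $n$ edges of $\Delta^M$ meeting at $\mu^M(p)$; by the Delzant condition these form a $\mathbb{Z}$\--basis of the weight lattice. Intertwining this action with the rotation action on $\mathbb{B}_r$ forces $\Lambda$ to match the standard weights to $\{v_1,\dots,v_n\}$, and since $\Lambda$ must carry the momentum cone of $\mathbb{B}_r$ (the positive orthant) onto the corner cone $\sum_k \mathbb{R}_{\ge 0}\, v_k$ it can only permute the extreme rays. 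Thus $\Lambda$ is one of exactly $n!$ lattice automorphisms indexed by a permutation $\pi \in S_n$, which accounts for the factor $n!$.

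For the fitting criterion I would pass to momentum images. The momentum image of $\mathbb{B}_r$ is the simplex $\mathrm{conv}\{0,\,r^2 e_1,\dots,r^2 e_n\}$, all of whose corner edges have $\textup{SL}(n,\,\mathbb{Z})$\--length $r^2$. Under the matching $\pi$ this simplex maps into the Delzant corner at $\mu^M(p)$, and by convexity it lies in $\Delta^M$ if and only if each of its $n+1$ vertices does, i.e. if and only if $\mu^M(p)+r^2 v_{\pi(k)} \in \Delta^M$ for every $k$; by primitivity of $v_{\pi(k)}$ this holds precisely when $r^2$ is smaller than the $\textup{SL}(n,\,\mathbb{Z})$\--length $\ell_{\pi(k)}$ of the corresponding edge, so ranging over $k$ the condition is exactly $r^2 < \min_j \ell_j$, which is $\textup{c}_p(r)=1$. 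Because the corner edges of the ball simplex all have the same length $r^2$, this criterion is independent of $\pi$, so at each vertex either all $n!$ matchings yield an embedding or none do. Invoking the part of Theorem \ref{maintheorem} that realizes each momentum\--theoretic possibility by an honest equivariant symplectic embedding and that distinct $(p,\pi)$ lie in distinct components, I obtain $\textup{Emb}_{(M,\,\sigma)}(r)=n!\sum_{p\in M^{\mathbb{T}^n}}\textup{c}_p(r)$.

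Invariance under symplectic\--toric equivalence is then immediate: by Delzant's theorem an equivariant symplectomorphism induces a translation composed with a $\textup{GL}(n,\,\mathbb{Z})$\--transformation of $\Delta^M$, and both the vertex set and the $\textup{SL}(n,\,\mathbb{Z})$\--lengths of edges are preserved by such maps, so each $\textup{c}_p(r)$ and hence the whole sum is unchanged. The step I expect to be the main obstacle is the passage from momentum\--image combinatorics to \emph{actual} equivariant symplectic embeddings --- showing that the fitting of momentum simplices is not merely necessary but sufficient, and that the $n!$ matchings at a fitting vertex give genuinely distinct components rather than being joined through the embedding space. This rigidity is exactly where the toric normal form near $p$ and the torus\--orbit description of components from Theorem \ref{maintheorem} must be used; the polytopal bookkeeping of edge lengths and the $n!$ factor is then routine.
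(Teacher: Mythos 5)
Your bookkeeping---basepoints forced into $M^{\mathbb{T}^n}$, the $n!$ permutation matchings of weights, and invariance via Delzant---reproduces what the paper does in its concluding observations, and that part is sound. The genuine gap is that you defer the two load-bearing steps to ``the part of Theorem \ref{maintheorem} that realizes each momentum-theoretic possibility by an honest equivariant symplectic embedding and that distinct $(p,\pi)$ lie in distinct components.'' No such parts exist in the statement of Theorem \ref{maintheorem}: it only asserts a homotopy equivalence with a disjoint union of $\textup{Emb}_{(M,\,\sigma)}(r)$ tori for an unspecified step function, and computing that number is precisely what Proposition \ref{maintheorem2} claims, so these two facts must be proved, not invoked. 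In the paper, realization is Lemma \ref{ss}, and it is \emph{not} proved by the equivariant Darboux theorem you appeal to---Darboux gives a normal form on a neighborhood of uncontrolled size, hence no quantitative criterion in terms of $r$ and edge lengths---but by the Karshon--Tolman centered-manifold result (Lemma \ref{KTlemma}) applied to $N=(\mu^M)^{-1}(\Sigma\setminus Z)$, which is equivariantly symplectomorphic to the open ball of the full radius. The component count (same $(p,\pi)$ implies same component, and each component $\simeq\mathbb{T}^n$) rests on Lemma \ref{2.1} (all equivariant embeddings based at $p$ have the \emph{same image}, so $h\mapsto u^{-1}\circ h$ identifies $\mathcal{E}_{x}^{\mathbb{T}^n}$ with $\mathcal{B}_r^{\mathbb{T}^n}$) together with the Alexander-trick retraction (Lemma \ref{2.3}, Corollary \ref{lastc}). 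Without substitutes for these, your argument yields only bounds on the number of components, not the equality (\ref{k}).

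There is also an outright error in your fitting criterion. You claim $\mu^M(p)+r^2 v_{\pi(k)}\in\Delta^M$ holds ``precisely when $r^2<\ell_{\pi(k)}$''; since the edge terminates at a vertex of $\Delta^M$, which is an extreme point, containment actually holds if and only if $r^2\le\ell_{\pi(k)}$. So momentum-image fitting gives the non-strict inequality, and your criterion, carried out correctly, miscounts exactly at the jump radii: for the blow-up example with $\textup{Emb}_{(M,\,\sigma)}=10\,\chi_{[0,1)}+2\,\chi_{[1,\sqrt{2})}$, it would give $\textup{Emb}_{(M,\,\sigma)}(1)=10$ instead of $2$. The strictness in the definition of $\textup{c}_p(r)$ for the compact ball requires a separate rigidity argument, which the paper builds into the phrasing of Lemma \ref{ss} (the infimum of edge lengths is $\ge r^2$ iff $\mathbb{B}_s$ embeds for every $s<r$; hence compact $\mathbb{B}_r$ embeds iff the infimum exceeds $r^2$ strictly). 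Concretely, if $r^2$ equals the minimal edge length $\ell$, an equivariant symplectic embedding of the compact ball would restrict to an area-preserving embedding of the closed disk $\{|z_1|\le r,\ z_2=\dots=z_n=0\}$, of area $\pi r^2$, into the invariant $2$-sphere lying over that edge, of area $\pi\ell=\pi r^2$; the image would then exhaust the sphere, which is impossible since a closed disk is not homeomorphic to a sphere. Supplying this boundary-case argument, the Karshon--Tolman realization step, and the Alexander-trick component analysis is exactly what separates your outline from a proof.
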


\begin{example}
\normalfont
Let $\sigma_{\textup{FS}}$ be the Fubini--Study form on $\mathbb{CP}^n$ and 
observe that $\mathbb{T}^n$ acts naturally on $(\mathbb{CP}^n,\, \lambda \cdot \sigma_{\textup{FS}})$, $\lambda>0$, with $n+1$ 
fixed points. The momentum
polytope is a tetrahedrum with vertices at $ {0}$ 
and $\lambda \,  {e}_i,$ where the $ {e}_i$
are the canonical basis vectors in $\mathbb{R}^{n}$. So if $M=\mathbb{CP}^n \times \mathbb{CP}^m$, 
the space of equivariant
symplectic embeddings from $\mathbb{B}_r$ into $M$ is homotopically equivalent to 
$$
\bigsqcup_{k=1}^{(n+m)!(n+1)(m+1)} \mathbb{T}^{n+m}
$$
if $r < \sqrt{\lambda}$, and it is empty otherwise.
\end{example}

The study of the space of symplectic embeddings is directly related to the study of the 
\emph{symplectic ball packing problem} cf. \cite{B2}, the equivariant version of which was
treated in \cite{P2}.

\section{Proof of Theorem \ref{maintheorem}}
\setcounter{equation}{2}
In this section we prove Theorem \ref{maintheorem}. For clarity, the proof is divided in three steps,
which we describe next.

We start by introducing the notation and making the following observations:
\begin{enumerate}
\item[i)]
Throughout the proof $\mathcal{R}$ will denote the space of rotations by matrices of the form
$(\delta_{i \, \tau(i)} \theta_{i\, j})_{i,\, j=1}^n$
with $\tau \in \textup{S}_n$ (the symmetric group) and $\theta_{i\, j} \in \mathbb{T}^1$, and $\mathcal{E}_{ {x}}^{\mathbb{T}^n}$ will denote the
space of equivariant symplectic embeddings $f$ from $\mathbb{B}_r$ into $M$ such that $f({0})= {p}$
and $\mu^M(p)=x \in \Delta^M$, equipped with the $\textup{C}^m$\--Whitney topology ($m \ge 0$). \emph{Throughout the present section we fix $f$}. 

Since each component of $\mathcal{R}$ 
is canonically identified with the $n$\--torus 
$\mathbb{T}^{n}$ (cf. Corollary \ref{lastc}), Theorem \ref{maintheorem}
amounts to prove that if $p \in M^{\mathbb{T}^n}$ ($M^{\mathbb{T}^n}$ denotes the 
$\mathbb{T}^n$\--fixed point set) is such that $\mu^{M}( {p})= {x}$, then the space 
$\mathcal{E}_{ {x}}^{\mathbb{T}^n}$ gets identified with $\mathcal{R}$ via a homotopy equivalence. 
\item[ii)]
Secondly let $\mathcal{B}_r^{\mathbb{T}^n}$ denote the space of equivariant symplectomorphisms
of $\mathbb{B}_r$ (again with respect to the $\textup{C}^m$\--Whitney topology). 

Recall that, for example, the
$\textup{C}^m$\--Whitney topology on $\mathcal{B}_r^{\mathbb{T}^n}$ is given by the
well--known norm
$$
\parallel \phi \parallel_{\textup{C}^m}=\max_{0 \le k \le m}
\sup_{ {z} \in \mathbb{B}_r} \parallel \textup{T}^k \phi
\parallel _{\mathfrak{M}(\mathbb{C})},
$$
where we are taking the norm $\parallel \cdot
\parallel _{\mathfrak{M}(\mathbb{C})} $ on
the right--hand side of this expression to be the canonical Euclidean norm
on the space of $n \times n$ matrices with complex entries.
\item[iii)]
We identify the automorphism group $\mbox{Aut}(\mathbb{T}^n)$ with the 
matrix group\linebreak 
$\textup{GL}(n, \, \mathbb{Z})$.
\item[iv)]
The elements $ {\alpha}_i^{ {p}}$'s, $1 \le i \le n$, denote the weights of the 
isotropy representation of $\mathbb{T}^n$ on $\textup{T}_{ {p}}M$; the
canonical basis vectors $ {e}_i \in \mathbb{R}^n$
represent the weights of the isotropy representation of
$\mathbb{T}^n$ on $\textup{T}_{ {0}} \mathbb{B}_r$. 
\end{enumerate}

\vspace{1mm}

\emph{Step 1}: Invariance of the image $f(\mathbb{B}_{r})$.

\vspace{1mm}

In this step we first show how to go from smooth maps on manifolds to affine maps on polytopes (see diagram (\ref{ml})), 
and secondly we use this to show the invariance of the image $f(\mathbb{B}_{r}) \subset M$. Precisely, one can think 
of an embedding being equivariant in the sense of commuting with the $\mathbb{T}^n$\--action, 
and it is when we reparametrize the torus that $\Lambda$ appears.

\begin{lemma} \label{2.1}
Let $g$ be any $\Lambda$\--equivariant and symplectic embedding such that the normalization condition $f( {0})=g( {0})= {p}$ holds.
Then for all $ {z} \in \mathbb{B}_{r}$, if $\mathbb{T}^n \cdot z$ denotes the $\mathbb{T}^{n}$\--orbit
that passes through $ {z}$, the identity $f(\mathbb{T}^n \cdot z)=g(\mathbb{T}^n \cdot z)$ holds, and therefore $f(\mathbb{B}_r)=g(\mathbb{B}_r)$. 
\end{lemma}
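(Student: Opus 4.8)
The plan is to reduce the set-theoretic equality $f(\mathbb{B}_r)=g(\mathbb{B}_r)$ to an identity of momentum maps, and then to exploit the special feature of symplectic--toric manifolds that the momentum map separates the $\mathbb{T}^n$-orbits. So the first thing I would do is extract the infinitesimal content of the commuting diagram (\ref{mainlemma}). Differentiating the equivariance relation $f(t\cdot z)=\psi(\Lambda(t),f(z))$ in $t$ shows that $f$ carries the fundamental vector field of $\xi\in\mathrm{Lie}(\mathbb{T}^n)$ on $\mathbb{B}_r$ to the fundamental vector field of $\Lambda_*\xi$ on $M$, all along $f(\mathbb{B}_r)$. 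Feeding this, together with $f^*\sigma=\sigma_0$ and the defining relations $\iota_{\xi_{\mathbb{B}_r}}\sigma_0=d\langle\mu^{\mathbb{B}_r},\xi\rangle$ and $\iota_{\eta_M}\sigma=d\langle\mu^M,\eta\rangle$, into a direct computation gives $d\langle\mu^M\circ f,\Lambda_*\xi\rangle=d\langle\mu^{\mathbb{B}_r},\xi\rangle$ for every $\xi$.

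Since $\mathbb{B}_r$ is connected, this integrates to $\Lambda_*^*\,(\mu^M\circ f)=\mu^{\mathbb{B}_r}+c$ for a constant covector $c$, and the normalization $f(0)=p$ together with $\mu^{\mathbb{B}_r}(0)=0$ pins down $c=\Lambda_*^*\,\mu^M(p)$. Because $\Lambda\in\mathrm{GL}(n,\mathbb{Z})$ the dual map $\Lambda_*^*$ is invertible, so
\[
\mu^M(f(z))=\mu^M(p)+(\Lambda_*^*)^{-1}\mu^{\mathbb{B}_r}(z).
\]
Running the identical argument for $g$, which is $\Lambda$-equivariant for the same $\Lambda$ and satisfies $g(0)=p$, produces the same formula; hence $\mu^M\circ f=\mu^M\circ g$ on all of $\mathbb{B}_r$.

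Next I would bring in the Delzant hypothesis. For a symplectic--toric manifold the momentum map descends to a homeomorphism $M/\mathbb{T}^n\xrightarrow{\ \sim\ }\Delta^M$; equivalently, two points of $M$ have equal momentum image precisely when they lie on one and the same $\mathbb{T}^n$-orbit. Thus, for each fixed $z$, the equality $\mu^M(f(z))=\mu^M(g(z))$ forces $f(z)$ and $g(z)$ onto a common orbit $\mathcal{O}$. On the other hand, equivariance and the surjectivity of $\Lambda$ give $f(\mathbb{T}^n\cdot z)=\{\Lambda(t)\cdot f(z)\}=\mathbb{T}^n\cdot f(z)$ and, likewise, $g(\mathbb{T}^n\cdot z)=\mathbb{T}^n\cdot g(z)$; since $f(z),g(z)\in\mathcal{O}$, both of these are exactly the single orbit $\mathcal{O}$. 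Therefore $f(\mathbb{T}^n\cdot z)=g(\mathbb{T}^n\cdot z)$, and taking the union over a set of orbit representatives yields $f(\mathbb{B}_r)=g(\mathbb{B}_r)$.

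I expect the momentum-map computation to be the main technical point: getting the pushforward of the fundamental vector fields correct, keeping the direction of the dual map $\Lambda_*^*$ straight, and justifying that a single normalization $f(0)=p$ suffices to fix the integration constant (which relies on connectedness of $\mathbb{B}_r$). The conceptual crux, however, is the toric fact that the momentum fibres are single orbits; this is exactly where the Delzant structure, rather than mere Hamiltonicity, is used, and without it the two images could genuinely fail to coincide.
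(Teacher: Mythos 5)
Your proposal is correct and follows essentially the same route as the paper: the same infinitesimal momentum-map computation integrated against the normalization $f(0)=g(0)=p$ (the paper's diagram (\ref{ml})), followed by the same appeal to the fact that momentum fibres of a symplectic--toric manifold are single $\mathbb{T}^n$-orbits, and the same use of surjectivity of $\Lambda$ to pass from equal momentum images to equal orbit images. No gaps; the only cosmetic difference is that the paper phrases the affine relation as a commuting diagram and compares $\mu^M$ on whole orbits rather than pointwise.
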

\begin{proof}
Let $f,\, g$ be $\Lambda$\--equivariant and symplectic
embeddings from $\mathbb{B}_r$ into $M$ with $f( {0})=g( {0})= {p}$. Under the identifications described in Section 1, the following diagram commutes,
where the top arrow stands for the affine map with
linear part $(\Lambda^{\textup{t}})^{-1}$, which takes $ {0}$ to $ {x}$:
\begin{eqnarray} \label{ml} 
\xymatrix{ \ar @{} [dr] |{\circlearrowleft}
\Delta^{\mathbb{B}_r} \ar[r]^{(\Lambda^{\textup{t}})^{-1}+ {x}}      &  \Delta^M  \\
           \mathbb{B}_r  \ar[r]^f  \ar[u]^{\mu^{\mathbb{B}_r}}  &       M      \ar[u]_{\mu^M}}.
\end{eqnarray}
In order to prove the commutativity of diagram (\ref{ml}), 
we denote by $\xi_M$ the vector field induced by the element 
$\xi \in \textup{Lie}(\mathbb{T}^n)$ via the exponential map
and note that from the definition of the momentum maps $\mu^{M}$ and $\mu^{\mathbb{B}_{r}}$,
the $\Lambda$\--equivariance of $f$, and the fact that $f^*\sigma=\sigma_0$, 
we have the following sequence of equalities, where $\textup{T}_{f(z)}\mu^M$ and
$\textup{T}_z \mu^{\mathbb{B}_r}$ denote, respectively, the tangent mapping of 
$\mu^M$ at $f(z)$ and of $\mu^{\mathbb{B}_r}$ at $z$,
\begin{eqnarray}
\langle \textup{T}_{z} \mu^{\mathbb{B}_{r}} (v),\, \xi \rangle_{z} 
 &=& (\sigma_{0})_{z}({v},\, \xi_{\mathbb{B}_r} ({z})) \nonumber \\
&=&  \sigma_{f({z})}(\textup{T}_{z} f({v}),\, \Lambda(\xi)_{M}(f({z}))) 
\nonumber \\
&=& \langle \textup{T}_{f({z})}  \mu^{M} (\textup{T}_{z}f({v})),\, \Lambda(\xi) 
\rangle_{f(z)} \nonumber \\
&=& \langle \Lambda^{\textup{t}} \circ \textup{T}_{f(z)}  
\mu^{M}(\textup{T}_{z}f(v)),\, \xi \rangle_{z},  \label{eq1}
\end{eqnarray}
where $ {z} \in \mathbb{B}_{r}$, $ {v} \in \textup{T}_{ {z}} \mathbb{B}_{r}$ 
and $\xi \in \mbox{Lie}(\mathbb{T}^{n})$.
Therefore by equation (\ref{eq1}) and by using the chain rule we 
obtain that for all $ {z} \in \mathbb{B}_r$
and $ {v} \in \textup{T}_{ {z}} \mathbb{B}_r$
\begin{eqnarray}
\textup{T}_{ {z}} \mu^{\mathbb{B}_{r}} ( {v})=\textup{T}_{ {z}}  (\Lambda^{\textup{t}}
\circ \mu^{M} \circ f) ( {v}). \label{eq2}
\end{eqnarray}
Considering equation (\ref{eq2}), 
$f( {0})= {p}$ and $\mu^{M}( {p})= {x}$ and composing with $(\Lambda^{\textup{t}})^{{-1}}$,
after integration we obtain the commutativity condition  on diagram
(\ref{ml}). Notice that
diagram (\ref{ml}) also holds for the embedding $g$.

Then it follows from the conjunction of diagram (\ref{ml}) and diagram (\ref{mainlemma})
that for all $t \in \mathbb{T}^{n}$ the following identities hold:
\begin{eqnarray}
\mu^{M}(\psi(\Lambda(t),\, f( {z})))&=&\mu^{M}(f(t \cdot  {z})) \nonumber \\
                                                                 &=&(\Lambda^{\textup{t}})^{-1} \circ \mu^{\mathbb{B}_{r}}( {z})+  {x} \nonumber \\
                                                                 &=&
\mu^{M}(g(t \cdot  {z}))=\mu^{M}(\psi(\Lambda(t),\, g( {z}))). 
\label{eq5}
\end{eqnarray}
Expression (\ref{eq5}) is clearly equivalent to $\mu^{M}(\mathbb{T}^n \cdot f(z))=\mu^{M}(\mathbb{T}^n
\cdot g(z))$, since
$\Lambda$ is an automorphism. Now since $M$ is symplectic--toric, by the proof of the Atiyah--Guillemin--Sternberg convexity theorem
we know that each fiber of the momentum map $\mu^{M}$ consists of a single connected orbit
which together with the last equality implies that $\mathbb{T}^n \cdot f(z)=\mathbb{T}^n \cdot
g(z)$. Since
$f(\mathbb{B}_{r})$ is the union of the orbit images
$f(\mathbb{T}^n \cdot z)$, we immediately obtain
that $f(\mathbb{B}_r)=g(\mathbb{B}_r)$, which concludes the proof. 
\end{proof}

\begin{figure}[htb]
\begin{center}
\includegraphics{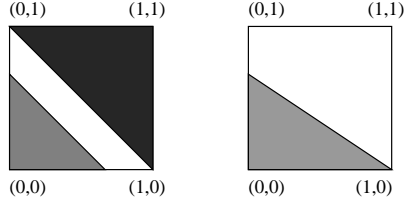}
\caption{Equivariant symplectic ball embeddings in $(\mathbb{CP}^1)^2$ (left);
the triangle on the right does \emph{not} come from such embedding.}
\end{center}
\label{AF2}
\end{figure}

It is possible to explicitly describe the momentum image $\mu^M(f(\mathbb{B}_r))$,
and for this purpose we recall the notion of $\textup{SL}(n, \, \mathbb{Z})$--length: if 
$ {x}, {y} \in \mathbb{R}^{n}$, we say that a segment line $[ {x}, \, {y}]$ joining
$ {x}$ to $ {y}$ in $\mathbb{R}^{n}$ 
\emph{has $\textup{SL}(n, \, \mathbb{Z})$\--length $d$}
if there exists a matrix $A \in \textup{SL}(n, \, \mathbb{Z})$ such that 
$A(d \,  {e}_{1})= {y}- {x}$ ($d$ is 
not defined in general, only for segments of rational slope). The Euclidean
length of a segment line agrees with its $\textup{SL}(n, \, \mathbb{Z})$\--length 
if and only if the segment is parallel to one 
of the coordinate axes in $\mathbb{R}^n$.

In their article \cite{KT2}, Karshon and Tolman made the following two definitions.
Let $(Q, \, \sigma^Q)$ be a connected symplectic $2m$\--dimensional 
manifold with momentum map $\mu^Q$ for an action of an $m$\--torus 
$\mathbb{T}^m$ on $Q$, and let $\Gamma \subset (\textup{Lie}(\mathbb{T}^m))^*$ 
be an open convex subset which contains the image of $Q$ under the momentum map $\mu^Q$. The
quadruple $(Q,\, \sigma^Q,\, \mu^Q,\, \Gamma)$ is a \emph{proper Hamiltonian $\mathbb{T}^m$\--
manifold} if the momentum map $\mu^Q$ is proper as a map to $\Gamma$. 

The proper Hamiltonian $\mathbb{T}^m$\--manifold $(Q,\, \sigma, \, \mu^Q,\, \Gamma)$
is said to be 
\emph{centered about a point $\alpha \in \Gamma$} if 
$\alpha$ is contained in the momentum map image of every component of $Q^K$,
for each $K \subset \mathbb{T}^m$.
Here 
$$
Q^K:=\{q \in Q \, | \, \psi_Q(a,\,q)=q, \, \, \forall a \in K\},
$$ 
where $\psi_Q \colon \mathbb{T}^m \times Q \to Q$ denotes the action of $\mathbb{T}^m$ on $Q$. The following lemma 
is Proposition 2.8 in \cite{KT2}.

\begin{lemma} [Karshon--Tolman, \cite{KT2}] \label{KTlemma}
Let  the quadruple $(Q,\, \sigma^Q,\, \mu^Q,\, \Gamma)$ be a proper Hamiltonian $2m$\--dimensional $\mathbb{T}^m$\--manifold. Suppose that $(Q,\, \sigma^Q,\, \mu^Q,\, \Gamma)$ is centered about  $\alpha \in \Gamma$ and that the preimage $(\mu^Q)^{-1}(\{\alpha\})$ consists
of a single fixed point $q$. Then $Q$ is equivariantly symplectomorphic to
$$
\{z \in \mathbb{C}^m \, | \, \alpha + \sum_{j=1}^m |z_j|^2 \, \eta^q_j \in \Gamma\},
$$
where $\eta^q_1,\ldots,\eta^q_m$ are the weights of the isotropy representation of $\mathbb{T}^m$ 
on $\textup{T}_qQ$. 
\end{lemma}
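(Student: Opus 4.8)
The plan is to prove the result by a \emph{local--to--global} argument: first identify a neighborhood of the fixed point $q$ with a neighborhood of $0$ in the candidate model, and then propagate this identification over all of $Q$ by means of a radial flow adapted to the momentum map. Write $W := \{z \in \mathbb{C}^m \mid \mu_W(z) \in \Gamma\}$ with $\mu_W(z) := \alpha + \sum_{j=1}^m |z_j|^2 \, \eta^q_j$, and note that $W$ is itself a proper Hamiltonian $\mathbb{T}^m$\--manifold for the standard linear toric action with weights $\eta^q_1, \ldots, \eta^q_m$, centered about $\alpha$, whose momentum fiber over $\alpha$ is the single fixed point $0$. The goal is thus to construct an equivariant symplectomorphism $\Phi \colon Q \to W$ with $\mu_W \circ \Phi = \mu^Q$.

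First I would produce the germ of $\Phi$ at $q$. Since $q$ is a fixed point of the Hamiltonian torus action on the symplectic manifold $(Q, \sigma^Q)$, the equivariant Darboux--Weinstein theorem gives an equivariant symplectomorphism $\Phi_0$ from a $\mathbb{T}^m$\--invariant neighborhood $U_0$ of $q$ onto a neighborhood $V_0$ of $0$ in $(\mathbb{C}^m, \sigma_0)$, carrying the linearized action to the standard linear action; by definition the weights of this linear action are the isotropy weights $\eta^q_1, \ldots, \eta^q_m$. Because a momentum map is determined by its differential up to an additive constant and $\mu^Q(q) = \alpha = \mu_W(0)$, after fixing this constant we may arrange $\mu_W \circ \Phi_0 = \mu^Q$ on $U_0$. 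This supplies the desired identification near $q$.

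The core of the argument is the global extension. On $W$ I would use the Euler vector field $E = \tfrac{1}{2} \sum_j (x_j \partial_{x_j} + y_j \partial_{y_j})$, which satisfies $\mathcal{L}_E \sigma_0 = \sigma_0$ and whose time\--$t$ flow sends $z$ to $e^{t/2} z$, hence rescales $\mu_W - \alpha$ by $e^t$; running it backward contracts $W$ onto $0$, and convexity of $\Gamma$ about $\alpha$ shows $W$ is preserved. The task is then to build the analogous vector field $\widetilde{E}$ on $Q$, characterized by $\mathcal{L}_{\widetilde{E}} \sigma^Q = \sigma^Q$ together with the requirement that it move along the rays of $\mu^Q$ emanating from $\alpha$; this is precisely where the \emph{centered} hypothesis enters, guaranteeing that the backward flow of $\widetilde{E}$ contracts \emph{all} of $Q$---including the lower\--dimensional strata $Q^K$---into $U_0$, since $\alpha$ lies in the momentum image of every component of every $Q^K$. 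Conjugating the two flows then extends $\Phi_0$ to a globally defined equivariant diffeomorphism $\Phi \colon Q \to W$; because both flows scale their symplectic forms by the common factor $e^t$ and $\Phi_0$ is symplectic on the overlap, $\Phi$ is symplectic everywhere, and equivariance persists because each flow commutes with its torus action.

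The main obstacle is exactly this extension step. Constructing $\widetilde{E}$ on $Q$ and proving that its flow is defined for the required times and sweeps out all of $Q$ is delicate: one must handle the stratification by isotropy type, where the naive radial flow degenerates, and show that no trajectory escapes to the boundary of $\Gamma$---here properness of $\mu^Q$ as a map to $\Gamma$ is essential, as it prevents level sets from running off to infinity. One must also verify that $\Phi$ is a bijection onto $W$; injectivity and surjectivity should follow from matching the contraction structure of $Q$ with that of $W$, both being determined by the shared momentum data and the single central fixed point. I would expect the verification that the centered and proper hypotheses together force the flow to realize the full global model to be the technical heart of the proof.
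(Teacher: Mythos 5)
First, a structural point: the paper does \emph{not} prove this lemma. It is imported by citation --- the text says explicitly that it is Proposition~2.8 of Karshon--Tolman \cite{KT2} --- so there is no internal proof to compare yours against; your proposal must be judged as a proof of the Karshon--Tolman result itself. Your local step is fine: the equivariant Darboux theorem at the fixed point $q$ gives $\Phi_0$ with $\mu_W\circ\Phi_0=\mu^Q$ near $q$, since two momentum maps for the same torus action on a connected set differ by a constant, which is killed by $\mu^Q(q)=\alpha$. But the global extension step, which you explicitly defer as ``the technical heart,'' is not a deferrable technicality --- it \emph{is} the theorem, and as written it contains genuine gaps. (i) A vector field with $\mathcal{L}_{\widetilde{E}}\,\sigma^Q=\sigma^Q$ exists only if $\sigma^Q$ is exact, which is not known a priori; it is a \emph{consequence} of the conclusion (the model $W$ is star-shaped, hence contractible), so assuming such an $\widetilde{E}$ exists globally is close to circular. (ii) Your ``characterization'' of $\widetilde{E}$ --- Liouville, plus moving along the momentum rays from $\alpha$ --- amounts to the overdetermined system $\mathcal{L}_{\widetilde{E}}\,\sigma^Q=\sigma^Q$, $\;\mathrm{d}\mu^Q(\widetilde{E})=\mu^Q-\alpha$, and establishing global solvability, backward completeness, and the fact that every backward trajectory enters $U_0$ is exactly where the centered and proper hypotheses must do their work; no construction is given. (iii) Even granting $\widetilde{E}$, the conjugated map $\Phi=(\text{flow of }E)_t\circ\Phi_0\circ(\text{flow of }\widetilde{E})_{-t}$ is well defined (independent of $t$) only if $(\Phi_0)_*\widetilde{E}=E$ on $U_0$, and Darboux--Weinstein gives you no control whatsoever on $\widetilde{E}$ near $q$; this compatibility is never arranged.

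To see that these hypotheses cannot simply be ``invoked as safeguards'' but must enter the construction at a precise point, consider (for $m\ge 2$) the manifold $Q'=W\setminus\mu_W^{-1}(B)$, where $B$ is a small closed ball inside the interior of the momentum image, away from $\alpha$. Then $Q'$ is connected, still centered about $\alpha$ (the fixed-point sets $Q'^K$ for nontrivial $K$ lie over the boundary faces and are untouched, and $\alpha$ remains in the image), and $(\mu_W)^{-1}(\alpha)=\{0\}$ is still a single fixed point; yet $Q'$ is not equivariantly symplectomorphic to $W$ --- it is not even contractible. What fails is properness over $\Gamma$: preimages of compact sets meeting $B$ are not compact. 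So properness does more than ``prevent level sets from running off to infinity,'' as you put it; it forbids holes over compact parts of $\Gamma$, and any correct flow argument must use this. For reference, Karshon--Tolman's own proof does not construct such a flow: it combines the equivariant local normal form at $q$ with their local-to-global rigidity result for centered proper Hamiltonian $\mathbb{T}^m$-manifolds (the germ along $(\mu^Q)^{-1}(\alpha)$ determines the space up to equivariant symplectomorphism), which is precisely the statement your vector-field construction was meant to reprove and did not.
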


We use Lemma \ref{KTlemma}  in order to prove the following lemma.

\begin{lemma} \label{ss}
The momentum image $\mu^M(f(\mathbb{B}_r))$ equals the subset of $\mathbb{R}^n$ given by 
the convex hull of $x$ and  $x+r^2\, \alpha^{p}_{i}$, where $1 \le i \le n$.
Furthermore, the infimum of the $\textup{SL}(n,\, \mathbb{Z})$\--lengths of the edges of 
$\Delta^M$ meeting at  $x$ is greater than or equal to $r^2$,
if and only if  for all $0<s<r$ there exists an embedding $h \colon \mathbb{B}_s \to M$ 
which is $\Lambda$\--equivariant and symplectic, satisfying $h(0)=p$.
\end{lemma}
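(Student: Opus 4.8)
The plan is to treat the two assertions separately. The description of $\mu^M(f(\mathbb{B}_r))$ will come directly from the commutative diagram (\ref{ml}) of Lemma \ref{2.1} together with the explicit shape of the momentum polytope of the ball, while the embedding criterion will be reduced, via the Karshon--Tolman normal form (Lemma \ref{KTlemma}), to the elementary question of when a simplex fits inside $\Delta^M$ near the vertex $x$.

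For the first part I would start from the commutativity of (\ref{ml}), which reads $\mu^M\circ f = (\Lambda^{\textup{t}})^{-1}\circ \mu^{\mathbb{B}_r} + x$. Since $\mu^{\mathbb{B}_r}_k(z)=|z_k|^2$, the image $\mu^{\mathbb{B}_r}(\mathbb{B}_r)$ is the simplex $\Delta^{\mathbb{B}_r}=\{y\in\mathbb{R}^n_{\ge 0}:\sum_k y_k\le r^2\}$, with vertices $0$ and $r^2\,e_k$. Applying the affine map $(\Lambda^{\textup{t}})^{-1}+x$ carries these to $x$ and to $x+r^2\,(\Lambda^{\textup{t}})^{-1}e_k$, so $\mu^M(f(\mathbb{B}_r))$ is their convex hull. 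It then remains to identify $(\Lambda^{\textup{t}})^{-1}e_k$ with $\alpha^p_k$: differentiating the $\Lambda$\--equivariance of $f$ at the fixed point $0\mapsto p$ intertwines the isotropy representation on $\textup{T}_0\mathbb{B}_r$, with weights $e_k$, with the one on $\textup{T}_pM$ twisted by $\Lambda$, whose weights are the $\alpha^p_k$, and comparing weights gives $\Lambda^{\textup{t}}\alpha^p_k=e_k$ after reordering. This also records that the $\alpha^p_k$ are the primitive edge directions of $\Delta^M$ at $x$ and, by the Delzant condition, form a lattice basis.

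For the second part I would first reduce the equivalence to a statement about polytopes. Writing $S_s:=\textup{conv}(x,\,x+s^2\alpha^p_1,\ldots,x+s^2\alpha^p_n)$, convexity of $\Delta^M$ together with primitivity of the $\alpha^p_k$ shows that $S_s\subset\Delta^M$ exactly when $s^2$ does not exceed the $\textup{SL}(n,\mathbb{Z})$\--length of each edge of $\Delta^M$ at $x$, i.e. when $s^2\le L$ with $L$ the infimum of these lengths; hence $S_s\subset\Delta^M$ for every $s<r$ if and only if $L\ge r^2$. Thus it suffices to show that, for a fixed $s$, the embedding $h$ exists if and only if $S_s\subset\Delta^M$. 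The converse direction here is immediate: the first part applied to $h$ shows $\mu^M(h(\mathbb{B}_s))=S_s$, which lies in $\Delta^M$ automatically.

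The forward direction is the nontrivial one, and setting up Lemma \ref{KTlemma} correctly is where I expect the main obstacle. I would choose an open convex $\Gamma\subset\mathbb{R}^n$ that is a neighbourhood of the cone at $x$ spanned by the $\alpha^p_k$, small enough that $x$ is the only vertex of $\Delta^M$ in $\Gamma$, that $\mu^M$ is proper onto $\Gamma$, and that $S_s\subset\Gamma$; the delicate verification is that $Q:=(\mu^M)^{-1}(\Gamma)$ is then a proper Hamiltonian $\mathbb{T}^n$\--manifold \emph{centered} about $x$ with $(\mu^M)^{-1}(\{x\})=\{p\}$ — centering holds because, once $\Gamma$ is small, each component of $Q^K$ is the germ of a face of $\Delta^M$ through $x$. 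Lemma \ref{KTlemma} then identifies $Q$ equivariantly symplectomorphically with $\{z\in\mathbb{C}^n:x+\sum_j|z_j|^2\alpha^p_j\in\Gamma\}$, and under this model the standard inclusion of $\mathbb{B}_s=\{\sum_j|z_j|^2\le s^2\}$ lands in $Q$ precisely because its momentum image is $S_s\subset\Gamma\cap\Delta^M$. That inclusion is symplectic and, since $(\Lambda^{\textup{t}})^{-1}e_k=\alpha^p_k$, it is $\Lambda$\--equivariant for the same $\Lambda$ as in the first part; composing with the equivariant symplectomorphism and noting $0\mapsto p$ yields the desired $h$.
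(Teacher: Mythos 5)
Your first part and your converse half of the second part are correct and follow the paper's own route: the description of $\mu^M(f(\mathbb{B}_r))$ comes from the commutativity of diagram (\ref{ml}) together with the identification $(\Lambda^{\textup{t}})^{-1}e_k=\alpha^p_{\tau(k)}$ (your derivation of this by differentiating the equivariance at the fixed point is actually a cleaner justification than the paper's one-line appeal to $\Lambda$ being an automorphism), and the converse is, as you say, immediate from the first part plus the observation that the ray $x+\mathbb{R}_{\ge 0}\alpha^p_i$ meets $\Delta^M$ exactly in the edge $E_i$. The forward direction also rests on the same key tool as the paper, namely Lemma \ref{KTlemma}, so the overall architecture matches; the only substantive difference is your choice of the convex set $\Gamma$ and of the manifold to which that lemma is applied.

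That choice, however, is where your write-up has a genuine gap. First, your prescription of $\Gamma$ is self-contradictory: by convexity, the tangent cone of $\Delta^M$ at the vertex $x$ contains all of $\Delta^M$, so any open ``neighbourhood of the cone at $x$'' contains every vertex of $\Delta^M$, and no such $\Gamma$ can contain $x$ as its only vertex. Second, even after replacing this by what you presumably intend (an open convex neighbourhood of $S_s$), the condition you actually impose --- that $x$ be the only vertex of $\Delta^M$ in $\Gamma$ --- is not sufficient for centredness: a positive-dimensional face $F$ with $x\notin F$ can enter $\Gamma$ without any vertex doing so, and then the component of $Q^K$ lying over $F\cap\Gamma$ has momentum image missing $x$. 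What is needed is that \emph{every face of every dimension} meeting $\Gamma$ contains $x$, and showing that such a $\Gamma$ exists while still containing the fixed-size simplex $S_s$ is precisely where the hypothesis on edge lengths must enter; you dispose of it with ``once $\Gamma$ is small'', which is unavailable here. The missing argument is short but real: if $F$ is a face avoiding $x$, with supporting functional $\nu$ satisfying $\langle\cdot,\nu\rangle\le c$ on $\Delta^M$ and $c':=c-\langle x,\nu\rangle>0$, then the edge bound $\ell_i\ge L$ gives $\langle\alpha^p_i,\nu\rangle\le c'/L$, so any point $x+\sum_i t_i\alpha^p_i\in F$ (with $t_i\ge 0$) satisfies $c'=\sum_i t_i\langle\alpha^p_i,\nu\rangle\le (c'/L)\sum_i t_i$, i.e.\ $\sum_i t_i\ge L>s^2$; hence all faces avoiding $x$ are disjoint from $S_s$, and a small convex thickening of $S_s$ then serves as $\Gamma$. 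The paper sidesteps this bookkeeping by taking $\Gamma$ to be the open half-space bounded by the hyperplane through the points $x+r^2\alpha^p_i$ and applying Lemma \ref{KTlemma} to $N=(\mu^M)^{-1}(\Sigma\setminus Z)$: the edge-length hypothesis gives $\Sigma\subset\Delta^M$ and $\Gamma\cap\Delta^M=\Sigma\setminus Z$ at once, centredness and properness become transparent, and the resulting model is $\textup{Int}(\mathbb{B}_r)$ itself, which produces the embeddings $h$ for all $s<r$ simultaneously rather than one $s$ at a time.
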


\begin{proof}
The first observation is that $\Delta^{\mathbb{B}_r}$ 
equals the convex hull in $\mathbb{R}^n$
of $0$ and $r^2 \, e_1,\ldots, r^2 \, e_n$. Secondly, since $\mu^{\mathbb{B}_r} \colon
\mathbb{B}_r \to \Delta^{\mathbb{B}_r}$ is onto, 
it follows from diagram (\ref{ml}) that 
$$
\mu^M(f(\mathbb{B}_r))=(\Lambda^{\textup{t}})^{-1}(\Delta^{\mathbb{B}_r})+x.
$$
Since $\Lambda$ is an automorphism,
$(\Lambda^{\textup{t}})^{-1}$ is an automorphism of the corresponding dual spaces and therefore
there exists a permutation $\tau \in \textup{S}_n$ such that $(\Lambda^{\textup{t}})^{-1}(e_i)=\alpha_{\tau(i)}^{p}$.
Then the linearity of $\Lambda$ implies that
$\mu^M(f(\mathbb{B}_r))$ equals the the convex hull in $\mathbb{R}^n$ of 
the points $x$ and $x+r^2 \, \alpha^{p}_{i}$, $1 \le i \le n$,
which proves the first claim.

Suppose that the infimum of the
$\textup{SL}(n,\, \mathbb{Z})$\--lengths
of the edges meeting at $x$ is greater than or equal to $r^2$. Let
$\Sigma$ be the convex hull of $x$ and $x+r^2\, \alpha_{i}^{p}$, with $1 \le i \le n$, and let 
$Z$ be the convex hull of $x+r^2\, \alpha_{i}^{p}$, with $1 \le i \le n$.
Notice that $\Sigma \subset \Delta^M$, $\Sigma \setminus Z$ is 
open in $\Delta^M$, and let
$\Gamma \subset \mathbb{R}^n$ be the open half--space of $\mathbb{R}^n$, whose 
closure's boundary $\partial (\textup{cl}(\Gamma))$ is the hyperplane of $\mathbb{R}^n$ that contains $Z$, and such that $\Sigma \setminus Z
\subset \Gamma$.

Let $N:=(\mu^M)^{-1}(\Sigma \setminus Z)$ and let $\sigma^N$ be
the symplectic form obtained by restricting $\sigma$ to $N$. The set $N$ is open in $M$ because 
it is the preimage of
the open set $\Sigma \setminus Z$ under the momentum map $\mu^M \colon
M \to \Delta^M$. By the proof of Atiyah--Guillemin--Sternberg convexity theorem, cf. \cite{A}, \cite{G1},
$N$ is a connected manifold.  Since $M$ is compact, the
momentum map $\mu^M \colon M \to \Delta^M$ is a proper map and therefore its restriction
$\mu^M \colon N \to \Sigma \setminus Z$ is a proper map, which means that
$\mu^M \colon N \to \Gamma$ is proper, since $(\mu^M)^{-1}(\Gamma \setminus (\Sigma \setminus Z))=\emptyset$. 
Therefore $(N,\, \sigma^N,\, 
\psi^{N})$ is a connected symplectic manifold with momentum map $\mu^M$,
and the quadruple $(N,\,\sigma^N,\,\mu^{M},\, \Gamma)$
is a proper Hamiltonian $\mathbb{T}^n$\--space.

On the other hand, notice that the quadruple $(N,\,\sigma^N,\,\mu^{M},\, \Gamma)$
is centered about the point $x$, and $(\mu^M)^{-1}(\{x\})=p$, so we can apply Lemma \ref{KTlemma}, 
and conclude that $N$ is equivariantly 
symplectomorphic
to the submanifold $X \subset \mathbb{C}^n$ given by
\begin{eqnarray}
X&:=&\{z \in \mathbb{C}^n \, | \, x + \sum_{i=1}^n |z_i|^2 \, \alpha_i^p \in \Sigma \setminus 
Z \} \nonumber \\
&=&\{z \in \mathbb{C}^n \, | \, x + \sum_{i=1}^n |z_i|^2 \, \alpha_i^p \in \Sigma\}
\setminus \{z \in \mathbb{C}^n \, | \, x + \sum_{i=1}^n |z_i|^2 \, \alpha_i^p \in 
Z\} \nonumber \\
&=& \mathbb{B}_r \setminus \partial \mathbb{B}_r=\textup{Int}(\mathbb{B}_r). \nonumber
\end{eqnarray}
Hence there exists an equivariant symplectomorphism $\phi:\textup{Int}(\mathbb{B}_r) \to 
N$, and by letting $j \colon \mathbb{B}_s \to \textup{Int}(\mathbb{B}_r)$ be 
the standard inclusion, if $s<r$, the map $h:=j_N \circ \phi \circ j : \mathbb{B}_s \to M$,
where $j_N \colon N \to M$ is the inclusion map, 
is an equivariant symplectic embedding for all $s<r$ with $h(0)=p$.
The converse follows from the first statement of the lemma. 
\end{proof}

Note that $\mu^M(f(\mathbb{B}_r))$ only depends on the fixed point
$ {p}$ and the radius $r$ (which was fixed a priori) and not on $f$. In
Figure 3 several momentum ball images are drawn using Lemma \ref{ss}. Note
that the shaded triangle on the right picture is not a Delzant
polytope since it fails to be smooth at $(0,\, 0)$. Delzant polytopes are simple,
edge--rational and smooth
polytopes, cf. Figure 2 (see \cite{G3} or \cite{C} for a definition of these notions). 

\vspace{1mm}

\emph{Step 2}: A deformation retraction on $\mathcal{B}_r^{\mathbb{T}^{n}}$.

\vspace{1mm}

In this step we use \emph{Alexander's trick} to construct a deformation retraction 
from the space of equivariant symplectomorphisms of the 
$2n$\--dimensional ball $\mathbb{B}_{r}$ in $\mathbb{C}^{n}$
onto a disjoint union of copies of $\mathbb{T}^{n}$.
The continuity of this deformation 
is standard and may be found in \cite{H}.

\begin{lemma} \label{2.3}
The space $\mathcal{B}_{r}^{\mathbb{T}^{n}}$ of
equivariant symplectomorphisms of the $2n$\--dimen-\linebreak sional ball 
$\mathbb{B}_{r}$
in $\mathbb{C}^{n}$, with respect to the standard symplectic form $\sigma_{0}$ and the canonical
action of $\mathbb{T}^{n}$ by rotations, deformation retracts onto its subspace of linear, equivariant and symplectic
rotations given by matrices in $\mathcal{R}$.
\end{lemma}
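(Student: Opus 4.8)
The plan is to realize the retraction through \emph{Alexander's trick}, rescaling each equivariant symplectomorphism towards its linearization at the origin. First I would record two structural facts about any $\phi \in \mathcal{B}_r^{\mathbb{T}^n}$. Equivariance means $\phi(a \cdot z) = \Lambda(a) \cdot \phi(z)$ for some $\Lambda \in \textup{Aut}(\mathbb{T}^n)$, so $\phi(0)$ is fixed by the whole torus and hence equals $0$, the unique fixed point of the rotation action on $\mathbb{B}_r$. Running the momentum--map computation of Lemma \ref{2.1} with $M = \mathbb{B}_r$ and $x = 0$, the induced affine map on $\Delta^{\mathbb{B}_r}$ has linear part $(\Lambda^{\textup{t}})^{-1}$ and fixes $0$; being an automorphism of the simplex $\Delta^{\mathbb{B}_r} = \textup{conv}(0, r^2 e_1, \ldots, r^2 e_n)$ it permutes the vertices $r^2 e_i$, so there is $\tau \in \textup{S}_n$ with $(\Lambda^{\textup{t}})^{-1}(e_i) = e_{\tau(i)}$. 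Comparing components of $\mu^{\mathbb{B}_r} \circ \phi = (\Lambda^{\textup{t}})^{-1} \circ \mu^{\mathbb{B}_r}$ then gives $|\phi_k(z)| = |z_{\tau^{-1}(k)}|$ for all $k$; in particular $\phi$ preserves the Euclidean norm, the fact I will use to keep the rescaled maps inside $\mathbb{B}_r$.

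Next I would set, for $\phi \in \mathcal{B}_r^{\mathbb{T}^n}$ and $t \in (0,1]$, the map $\phi_t(z) := \tfrac1t \, \phi(t z)$, and define $\phi_0 := \textup{T}_0 \phi$, the derivative at the origin. Writing $D_\lambda(z) = \lambda z$ one has $\phi_t = D_{1/t} \circ \phi \circ D_t$, and I would check that each $\phi_t$ again lies in $\mathcal{B}_r^{\mathbb{T}^n}$: it sends $\mathbb{B}_r$ to $\mathbb{B}_r$ since $\|\phi_t(z)\| = \tfrac1t\|\phi(tz)\| = \tfrac1t\|tz\| = \|z\|$ by the norm preservation above; it is equivariant with the same $\Lambda$ because the torus action is linear and so commutes with $D_\lambda$; and it is symplectic because $D_\lambda^* \sigma_0 = \lambda^2 \sigma_0$ yields $\phi_t^* \sigma_0 = t^{-2}\, D_t^*\, \phi^* \sigma_0 = t^{-2}\, D_t^* \sigma_0 = \sigma_0$. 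Thus $H(\phi, t) := \phi_t$, with $H(\phi, 1) = \phi$, is the candidate deformation. I would then identify the endpoint $\phi_0 = \textup{T}_0\phi$ as an element of $\mathcal{R}$: it is a real--linear symplectomorphism intertwining the standard and the $\Lambda$--twisted rotation actions, so it carries the $i$-th coordinate line (the weight space for $e_i$) to the $\tau(i)$-th one and is complex--linear there, conjugate--linearity being anti--symplectic; norm preservation forces the scalar on each line to lie in $\mathbb{T}^1$, so $\phi_0$ is exactly a monomial matrix of $\mathcal{R}$. Finally, when $\phi \in \mathcal{R}$ is already linear one has $\phi_t = \phi$ for all $t$, so $H$ fixes $\mathcal{R}$ pointwise and $H(\cdot, 0)$ is a retraction onto $\mathcal{R}$.

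The main obstacle is the continuity of $H$ in the $\textup{C}^m$--Whitney topology, and in particular its continuity as $t \to 0^+$, where the defining formula degenerates. Here I would invoke Taylor's theorem with remainder to write $\phi(tz) = t\,\textup{T}_0\phi(z) + o(t)$, together with the analogous expansions of the higher derivatives $\textup{T}^k\phi$, so that $\textup{T}^k \phi_t \to \textup{T}^k \phi_0$ uniformly on $\mathbb{B}_r$ for each $0 \le k \le m$ and hence $\phi_t \to \phi_0$ in the $\textup{C}^m$ norm; joint continuity in $(\phi, t)$ for $t > 0$ is routine from $\phi_t = D_{1/t}\circ\phi\circ D_t$ and the continuity of composition and rescaling. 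This limiting estimate is precisely the delicate point, so I would carry out the Taylor bound explicitly and otherwise defer to \cite{H} for the continuity of the Alexander isotopy.
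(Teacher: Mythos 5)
Your proof is correct and takes essentially the same route as the paper's: both rest on Alexander's trick $\phi_t=(m_t)^{-1}\circ\phi\circ m_t$ with $\phi_0=\textup{T}_0\phi$, both verify that $\phi_t$ stays equivariant and symplectic, and both defer the continuity of the isotopy in the $\textup{C}^m$--Whitney topology to \cite{H}. You additionally make explicit two points the paper leaves implicit---that norm preservation (via the momentum--map identity $|\phi_k(z)|=|z_{\tau^{-1}(k)}|$) keeps $\phi_t$ a self--map of $\mathbb{B}_r$, and that the linearization $\textup{T}_0\phi$ really lands in $\mathcal{R}$---and both verifications are correct.
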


\begin{proof}
We define the transformation  $ {H}_{\mathbb{B}_r}^{\mathbb{T}^n}$ from  $\mathcal{B}_r^{\mathbb{T}^n} \times [0, \, 1] $
into  $\mathcal{B}_r^{\mathbb{T}^n}$, by the formula $ {H}_{\mathbb{B}_r}^{\mathbb{T}^n}(\phi,\, t):=\phi_{t}$, 
where $\phi_t$ is  the composite map 
\begin{eqnarray} \label{d3}
\phi_t:=(m_t)^{-1} \circ \phi \circ m_t, \textup{  }t \neq 0.
\end{eqnarray}
The map $m_t$ in expression (\ref{d3}) denotes the linear contraction of factor $0 \le t \le 1$ on $\mathbb{B}_r$, $m_t( {z})=t \,  {z}$;
and when $t=0$, $\phi_t=\phi_{0}$ is defined to be the tangent mapping $\textup{T} \phi$
of the map $\phi$, evaluated at $ {0}$. (This expression for $\phi_t$ is 
known as \emph{Alexander's trick}.) 
It is easy to check that $ {H}_{\mathbb{B}_{r}}^{\mathbb{T}^{n}}$
is continuous and that the evaluation map $[0, \,1] \times \mathbb{B}_r \to \mathbb{B}_r$
given by $(t, \, {z}) \mapsto \phi_t( {z})$ is smooth. Since 
$ {H}_{\mathbb{B}_{r}}^{\mathbb{T}^{n}}$
is the identity on linear maps, we conclude that it
is a deformation retraction, not only a homotopy, onto
the space of ball rotations by matrices 
$
(\delta_{i \, \tau(i)} \theta_{i\, j})_{i,\, j=1}^n
$
with $\tau \in \textup{S}_n$ (the symmetric group) and $\theta_{i\, j} \in \mathbb{T}^1$.

We have left to check that $ {H}_{\mathbb{B}_{r}}^{\mathbb{T}^{n}}$ is well defined,
i.e. that $\phi_{t} \in \mathcal{B}_{r}^{\mathbb{T}^{n}}$.  Indeed, the equivariance of 
the mapping $\phi_{t}$ follows directly from formula (\ref{d3}); explicitely we have that
if $\phi$ is equivariant with respect to $\Lambda \in \textup{Aut}(\mathbb{T}^n)$, then
$\phi_{t}(s \cdot  {z})= 1/t \, \phi(s \cdot t  {z})=\Lambda(s) \cdot \phi_{t}( {z})$
for all $s \in \mathbb{T}^{n}$. By differentiating formula (\ref{d3}) we
obtain that
\begin{eqnarray} \label{fh}
\textup{T} (\phi_t)=(m_t)^{-1} \circ \textup{T} \phi \circ m_t,
\end{eqnarray}
and since $m_t$ is a linear isomorphism, the mapping $\phi_{t}$ is a diffeomorphism. Furthermore, 
since the mapping $\phi$ is symplectic, it follows from expression (\ref{fh}) that
for all $ {z} \in \mathbb{B}_r$ we have that
$(\phi_{t}^* \sigma_0)_{z} (u,\,v)=
(\sigma_0)_{\phi(z)}(\textup{T}_z\phi(u), \, \textup{T}_z\phi(v))=(\sigma_0)_z(u,\,v)
$,
for every pair of vectors $u, \, v \in \textup{T}_z\mathbb{B}_r$,
and hence $\phi_t$ is a symplectic mapping. Therefore $\phi_t$ is a diffeomorphism, which is equivariant
and symplectic, or equivalently $\phi_t \in \mathcal{B}_{r}^{\mathbb{T}^{n}}$. We have been 
assuming that $t \neq 0$, but if $t=0$, it is trivial
that $\phi_{0} \in \mathcal{B}_r^{\mathbb{T}^n}$.
\end{proof}

\begin{cor} \label{lastc}
The space $\mathcal{B}_{r}^{\mathbb{T}^{n}}$ of
equivariant symplectomorphisms of the $2n$\--dim\--\linebreak ensional ball $\mathbb{B}_{r}$
in $\mathbb{C}^{n}$, with respect to the standard symplectic form $\sigma_{0}$ and the canonical
action of $\mathbb{T}^{n}$ by rotations, is homotopically equivalent
to a disjoint union of $n!$ copies of $\mathbb{T}^n$.
\end{cor}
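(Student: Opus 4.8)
The plan is to read the corollary off directly from Lemma \ref{2.3}, whose deformation retraction already identifies the homotopy type of $\mathcal{B}_r^{\mathbb{T}^n}$ with that of the subspace $\mathcal{R}$ of ball rotations. It therefore remains only to determine the homeomorphism type of $\mathcal{R}$ itself, and the whole content of the corollary reduces to an elementary analysis of the space of monomial matrices with unit--modulus entries.

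First I would partition $\mathcal{R}$ according to the underlying permutation. For each $\tau \in \textup{S}_n$ let $\mathcal{R}_\tau \subset \mathcal{R}$ be the set of matrices of the form $(\delta_{i\,\tau(i)}\theta_{ij})_{i,j=1}^n$ whose nonzero pattern is prescribed by $\tau$. Since the entries $\theta_{ij} \in \mathbb{T}^1$ are nonzero, each such matrix has exactly one nonzero entry in every row and column, so it determines its support and hence $\tau$ uniquely; thus the $\mathcal{R}_\tau$ are pairwise disjoint and their union is all of $\mathcal{R}$. Within a fixed $\mathcal{R}_\tau$ the only remaining freedom is the choice of the $n$ phases in $\mathbb{T}^1$, one per nonzero entry, so the assignment $(\theta_1,\ldots,\theta_n) \mapsto$ (the corresponding matrix) is a continuous bijection $\mathbb{T}^n \to \mathcal{R}_\tau$. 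Because $\mathbb{T}^n$ is compact and the ambient space of $n \times n$ complex matrices is Hausdorff, this map is a homeomorphism, and each $\mathcal{R}_\tau$ is homeomorphic to $\mathbb{T}^n$.

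Next I would verify that $\mathcal{R}$ is genuinely the topological disjoint union $\bigsqcup_{\tau}\mathcal{R}_\tau$, and not merely a set--theoretic one; for this it suffices to show that each $\mathcal{R}_\tau$ is open (equivalently closed) in $\mathcal{R}$. Each $\mathcal{R}_\tau$ is compact, being the continuous image of the compact torus $\mathbb{T}^n$, and distinct $\mathcal{R}_\tau$ are disjoint compact subsets of the metric space of complex matrices; hence they lie at positive mutual distance, so each is clopen in $\mathcal{R}$. Consequently $\mathcal{R}$ is the disjoint union of its $|\textup{S}_n| = n!$ pieces, each homeomorphic to $\mathbb{T}^n$. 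Combining this with Lemma \ref{2.3}, the space $\mathcal{B}_r^{\mathbb{T}^n}$ deformation retracts onto $\mathcal{R} \cong \bigsqcup_{n!}\mathbb{T}^n$ and is therefore homotopically equivalent to a disjoint union of $n!$ copies of $\mathbb{T}^n$, as claimed.

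The only point requiring any care is the separation step above: confirming that the components do not accumulate on one another, so that the homotopy type really is the full disjoint union rather than some glued quotient. This is exactly where the compactness of the phase torus $\mathbb{T}^1$ is used. Everything else is bookkeeping of the monomial structure, and I expect no genuine difficulty beyond that verification.
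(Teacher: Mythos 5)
Your proposal is correct and takes essentially the same route as the paper: apply Lemma \ref{2.3} to deformation retract $\mathcal{B}_{r}^{\mathbb{T}^{n}}$ onto $\mathcal{R}$, then identify $\mathcal{R}$ with a disjoint union of $n!$ copies of $\mathbb{T}^{n}$. The paper leaves that identification as a one-line observation, and your partition of $\mathcal{R}$ by the underlying permutation, together with the compactness argument showing each $\mathcal{R}_{\tau}$ is clopen, simply supplies the details of that observation.
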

\begin{proof}
Apply Lemma \ref{2.3} and observe that 
the space of ball rotations by matrices 
$
(\delta_{i \, \tau(i)} \theta_{i\, j})_{i,\, j=1}^n
$
with $\tau \in \textup{S}_n$ and $\theta_{i\, j} \in \mathbb{T}^1$
is homotopically equivalent to a disjoint union of $n!$ copies of $\mathbb{T}^n$.
\end{proof}

We conclude the proof with Step 3, in which Lemma \ref{2.1}, Lemma \ref{ss} and Lemma \ref{2.3}
are combined in order to prove Theorem \ref{maintheorem}. The proof
of Proposition \ref{maintheorem2} will follow from the proof
of Theorem \ref{maintheorem}, since the function $\textup{Emb}_{(M,\,\sigma)}$ will
be explicitly computed.

\vspace{1mm}
                                                                                                                            
\emph{Step 3}: Lifting the deformation $\phi_{t}$ to $\mathcal{E}_{ {x}}^{\mathbb{T}^{n}}$ and conclusion.

\vspace{1mm}

In this final step we show that $\mathcal{E}_{ {x}}^{\mathbb{T}^{n}}$
is homotopically equivalent to a disjoint union
of copies of $\mathbb{T}^{n}$.

\begin{lemma}
Suppose that the infimum of the $\textup{SL}(n, \, \mathbb{Z})$\--lengths of the edges of $\Delta^M$ meeting at 
$ {x}$ is strictly greater than $r^2$. Then there exists an
equivariant and symplectic embedding $u \colon \mathbb{B}_{r} \to M$
with $u({0}) = {p}$ such that if
$\rho$ is the identification map on $\mathcal{E}_{ {x}}^{\mathbb{T}^{n}}$ which takes
values on $\mathcal{B}_r^{\mathbb{T}^{n}}$ and is given 
by formula $\rho(h):=u^{-1} \circ h$, where $h \in \mathcal{E}_{ {x}}^{\mathbb{T}^{n}}$,
the space $\mathcal{E}_{ {x}}^{\mathbb{T}^{n}}$ is homotopically equivalent to 
the space $\rho^{-1}(\mathcal{R})$.
\end{lemma}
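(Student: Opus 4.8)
The plan is to first produce the reference embedding $u$, then to show that the prescription $\rho(h)=u^{-1}\circ h$ identifies $\mathcal{E}_x^{\mathbb{T}^n}$ homeomorphically with the \emph{whole} space $\mathcal{B}_r^{\mathbb{T}^n}$, and finally to transport the deformation retraction of Lemma \ref{2.3} back to $\mathcal{E}_x^{\mathbb{T}^n}$.

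For the existence of $u$ I would exploit the strictness of the hypothesis. If $\ell^2>r^2$ denotes the infimum of the $\textup{SL}(n,\,\mathbb{Z})$\--lengths of the edges of $\Delta^M$ meeting at $x$, then $r<\ell$, and applying the forward direction of Lemma \ref{ss} with $\ell$ in the role of $r$ yields, for every $s<\ell$, an equivariant symplectic embedding of $\mathbb{B}_s$ sending $0$ to $p$. Choosing $s=r$ (legitimate since $r<\ell$) produces the desired $u\colon\mathbb{B}_r\to M$ with $u(0)=p$, defined on the full closed ball; in particular $\mathcal{E}_x^{\mathbb{T}^n}\neq\emptyset$.

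Next I would verify that $\rho$ is a well\--defined homeomorphism onto $\mathcal{B}_r^{\mathbb{T}^n}$. The crucial point is that $u^{-1}\circ h$ genuinely makes sense: any $h\in\mathcal{E}_x^{\mathbb{T}^n}$ and the fixed $u$ are equivariant symplectic embeddings with $h(0)=u(0)=p$, so Lemma \ref{2.1} gives $h(\mathbb{B}_r)=u(\mathbb{B}_r)$, whence $u^{-1}\circ h$ is a bona fide self\--map of $\mathbb{B}_r$. It is a symplectomorphism, being a composite of a symplectic embedding with the inverse of one, and it is equivariant: if $h$ and $u$ are $\Lambda_h$\-- and $\Lambda_u$\--equivariant then $u^{-1}\circ h$ is $\Lambda_u^{-1}\Lambda_h$\--equivariant, so $\rho(h)\in\mathcal{B}_r^{\mathbb{T}^n}$. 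Conversely, every $\phi\in\mathcal{B}_r^{\mathbb{T}^n}$ must send the origin to a $\mathbb{T}^n$\--fixed point of $\mathbb{B}_r$, and $0$ is the only one, so $\phi(0)=0$; hence $u\circ\phi\in\mathcal{E}_x^{\mathbb{T}^n}$ and $\rho(u\circ\phi)=\phi$, exhibiting $\phi\mapsto u\circ\phi$ as a two\--sided inverse of $\rho$. Continuity of both maps in the $\textup{C}^m$\--Whitney topology is the standard continuity of pre\-- and post\--composition by the fixed smooth maps $u$ and $u^{-1}$ (the latter smooth on the compact set $u(\mathbb{B}_r)$), so $\rho$ is a homeomorphism.

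Finally, let $H_{\mathbb{B}_r}^{\mathbb{T}^n}$ be the deformation retraction of $\mathcal{B}_r^{\mathbb{T}^n}$ onto $\mathcal{R}$ constructed in Lemma \ref{2.3}. Conjugating it through $\rho$, namely setting $\widetilde{H}(h,\,t):=u\circ H_{\mathbb{B}_r}^{\mathbb{T}^n}(u^{-1}\circ h,\,t)$, produces a deformation retraction of $\mathcal{E}_x^{\mathbb{T}^n}$ onto $\rho^{-1}(\mathcal{R})$, which in particular gives the asserted homotopy equivalence (and, combined with Corollary \ref{lastc}, the eventual description by copies of $\mathbb{T}^n$). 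The main obstacle, and the precise place where the hypotheses enter, is twofold: securing $u$ on the full ball $\mathbb{B}_r$, which is exactly why the \emph{strict} inequality is needed since Lemma \ref{ss} only delivers embeddings of strictly smaller balls; and establishing that $\rho$ is well defined, which rests entirely on Lemma \ref{2.1}, for without the coincidence of images $h(\mathbb{B}_r)=u(\mathbb{B}_r)$ the expression $u^{-1}\circ h$ would not even be a self\--map of $\mathbb{B}_r$. Once these are in place, the remaining topology is a formal transport of Lemma \ref{2.3} along the homeomorphism $\rho$.
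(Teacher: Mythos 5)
Your proof is correct and follows essentially the same route as the paper: existence of $u$ via Lemma \ref{ss}, well-definedness of $\rho(h)=u^{-1}\circ h$ via the image-coincidence of Lemma \ref{2.1}, and transport of the Alexander-trick deformation retraction of Lemma \ref{2.3} through $\rho$ (the paper encodes this transport in the commutative diagram (\ref{important})). In fact you spell out several points the paper leaves implicit, notably why the strict inequality permits applying Lemma \ref{ss} at radius $r$ itself, and why $\rho$ is surjective onto $\mathcal{B}_r^{\mathbb{T}^n}$ (every equivariant symplectomorphism of $\mathbb{B}_r$ fixes the origin).
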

\begin{proof} The first observation is that by Lemma \ref{ss} there exists a
$\Lambda$\--equivariant and symplectic embedding $u$ from $\mathbb{B}_{r}$
into $M$ with $u({0}) = {p}$. In order to construct homotopy
equivalences between $\mathcal{E}_{ {x}}^{\mathbb{T}^{n}}$ and $\mathcal{R}$, we
define $\rho$ to be the identification map on $\mathcal{E}_{ {x}}^{\mathbb{T}^{n}}$
which takes values in $\mathcal{B}_r^{\mathbb{T}^{n}}$ and is given  by
formula $\rho(h):=u^{-1} \circ h$ for every $h \in
\mathcal{E}_{ {x}}^{\mathbb{T}^{n}}$.  Now we claim that the map
$ {H}^{\mathbb{T}^n}_{ {x}}$ from $\mathcal{E}_{ {x}}^{\mathbb{T}^n}
\times [0, \,1]$ to $\mathcal{E}_{ {x}}^{\mathbb{T}^n}$, given by the commutative diagram
(\ref{important}) below, is a well--defined and continuous homotopy  satisfying
$ {H}_{ {x}}^{\mathbb{T}^n}(\mathcal{E}_{ {x}}^{\mathbb{T}^n} \times
\{0\})= \rho^{-1}(\mathcal{R})$, while $\mathcal{E}_{ {x}}^{\mathbb{T}^{n}}$ is
preserved at time $t=1$, i.e. we have that 
$ {H}_{ {x}}^{\mathbb{T}^n}(\mathcal{E}_{ {x}}^{\mathbb{T}^n}  \times
\{1\})=\mathcal{E}_{ {x}}^{\mathbb{T}^n}$. The diagram is the following:
\begin{eqnarray} \label{important} \xymatrix{\ar @{} [dr] |{\circlearrowleft}
\mathcal{E}_{ {x}}^{\mathbb{T}^n} \times [0, \, 1]  
\ar[r]^{ \, \, \, \, \, \, \, \, \, \, \, {H}_{ {x}}^{\mathbb{T}^n} }   \ar[d]_{\rho \times
\mathrm{id}} & \mathcal{E}_{ {x}}^{\mathbb{T}^n}  \ar[d]^{\rho} \\
\mathcal{B}_r^{\mathbb{T}^{n}} \times [0, \,1]   
\ar[r]^{ \, \, \, \, \, \, \, \, \, \, \, {H}_{\mathbb{B}_r}^{\mathbb{T}^n}}  & 
\mathcal{B}_r^{\mathbb{T}^n} }.  
\end{eqnarray} 
The mapping $ {H}^{\mathbb{T}^n}_{ {x}}$ is well defined 
by Lemma \ref{2.1}.  Note that $ {H}_{ {x}}^{\mathbb{T}^n}$
is continuous,
since the identifications $\rho$ and $\rho^{-1}$ are obviously continuous and we showed in Lemma \ref{2.3}
that $ {H}_{\mathbb{B}_r}^{\mathbb{T}^n}$  is continuous.
We can therefore conclude, from the previous considerations and the fact that
that $ {H}_{\mathbb{B}_{r}}^{\mathbb{T}^{n}}$ is a deformation retraction in the $\textup{C}^m$\--Whitney topology,
that $ {H}^{\mathbb{T}^n}_{ {x}}$ induces homotopy equivalences $\rho$ and 
$\delta(f):= {H}_{ {x}}^{\mathbb{T}^n}(f,\, 0)$ between $\mathcal{E}_{ {x}}^{\mathbb{T}^{n}}$ and $\rho^{-1}
(\mathcal{R})$,
with $\rho \circ \delta$ homotopic to $\mbox{id}_{\mathcal{E}_{ {x}}^{\mathbb{T}^n} }$ and $\delta \circ \rho =\mbox{id}_{\rho^{-1}
(\mathcal{R})}$.
\end{proof}

In order to conclude the proof of Theorem \ref{maintheorem} we simply make the following observations:
\begin{itemize}
\item
First, the space described in it is precisely the 
disjoint union of the $\mathcal{E}_{ {x}}^{\mathbb{T}^n}$, $ {x}$ being a vertex of $\Delta^M$, 
because $ {0}$ is to be mapped to a fixed point of $\psi$. 
\item The number of $\mathbb{T}^n$\--fixed points, which is the same as the
number of vertices of $\Delta^M$, is precisely $\chi(M)$. This follows
from the analysis of the momentum map as in
Atiyah--Delzant--Guillemin--Sternberg theory (see for example \cite{G3},
\cite{G4}).
\item
If we denote by $\textup{Emb}_{(M,\,\sigma)}(r)$ the number of copies of $\mathbb{T}^{n}$ onto which the space considered in Theorem \ref{maintheorem}
retracts (see formula (\ref{k})), $\textup{Emb}_{(M,\,\sigma)}(r)$ is obtained by multiplying the number of fixed points that admit such an embedding (see Lemma \ref{ss})
by the number of copies of $\mathbb{T}^{n}$
onto which $\mathcal{E}_{ {x}}^{\mathbb{T}^{n}}$ (for the particular point)
retracts; this latter number is $n!$ (see Corollary \ref{lastc}), i.e. as many
copies of $\mathbb{T}^{n}$ as possible ways that the canonical basis vectors
$ {e}_{i}$ may be mapped onto the basis of  weights
$ {\alpha}_{i}^{ {p}}$ (for the particular point).  Also, the former
number is by Lemma \ref{ss} controlled by the Boolean variable
$\textup{c}_{ {p}}(r)$ defined in Proposition \ref{maintheorem2}. Therefore
$\textup{Emb}_{(M,\,\sigma)}(r)$ is given by $\textup{Emb}_{(M,\,\sigma)}(r)= n! \,\sum_{  {p} \in
M^{\mathbb{T}^n} } \textup{c}_{ {p}}(r)$, as we wanted to show.  
\item
It is obviously true that if $(M, \, \sigma)$ is equivariantly symplectomorphic to
$(\widetilde{M}, \, \widetilde{\sigma})$, then $\textup{Emb}_{(M,\,\sigma)}(r)=\textup{Emb}_{(\widetilde{M},\,\widetilde{\sigma})}(r)$, so the
integer $\textup{Emb}_{(M,\,\sigma)}(r)$ is a symplectic--toric invariant.
\end{itemize}

\begin{figure}[htb]
\begin{center}
\includegraphics{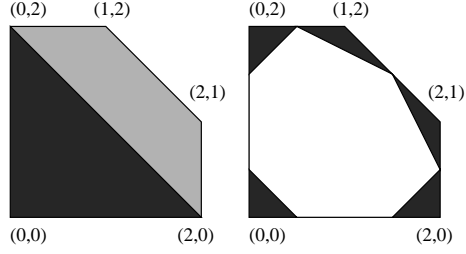}
\caption{Polytope corresponding to the Delzant manifold $(M, \, \sigma)$
obtained by blowing up $\mathbb{S}^2_{r_0} \times \mathbb{S}^2_{r_0}$ with $r_0=1/\sqrt{2}$. 
Observe that $\textup{Emb}_{(M,\,\sigma)}(\sqrt{2})=0$ (proof in left figure) 
and $\textup{Emb}_{(M,\,\sigma)}(1/\sqrt{2})=10$ (proof in right figure). See Lemma \ref{t}.}
\end{center}
\label{AF6}
\end{figure}

As a final remark we observe that 
the invariant function $\textup{Emb}_{(M,\,\sigma)}$ associated to the Delzant manifold $(M, \, \sigma)$
always reaches its minimum and maximum values
on an interval of strictly positive length. 

\begin{lemma} \label{t}
There exist numbers $r_0,\, s_0>0$ such that if $r \le r_0$, then the space of equivariant symplectic embeddings from 
$\mathbb{B}_r$ into $M$ is homotopically
equivalent to a disjoint union of $n! \, \chi(M)$ copies of $\mathbb{T}^n$,
and if $s \ge s_0$, then it is empty.
\end{lemma}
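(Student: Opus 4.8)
The plan is to deduce the lemma directly from the explicit formula for $\textup{Emb}_{(M,\,\sigma)}$ established in Proposition \ref{maintheorem2}, together with the finiteness of the combinatorial data of the polytope $\Delta^M$. Recall that by Theorem \ref{maintheorem} the space of equivariant symplectic embeddings of $\mathbb{B}_r$ into $M$ is homotopically equivalent to a disjoint union of $\textup{Emb}_{(M,\,\sigma)}(r)$ copies of $\mathbb{T}^n$, and that $\textup{Emb}_{(M,\,\sigma)}(r)=n!\sum_{p\in M^{\mathbb{T}^n}}\textup{c}_p(r)$, where $\textup{c}_p(r)=1$ precisely when the infimum $\ell_p$ of the $\textup{SL}(n,\,\mathbb{Z})$\--lengths of the edges of $\Delta^M$ meeting at $\mu^M(p)$ is strictly greater than $r^2$. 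Thus it suffices to control the finite collection of numbers $\{\ell_p\}_{p\in M^{\mathbb{T}^n}}$ uniformly in the radius.

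First I would observe that, since $M$ is compact, $\Delta^M$ is a compact Delzant polytope and hence has finitely many vertices (equivalently, $M^{\mathbb{T}^n}$ is finite) and finitely many edges, each of which is a bounded segment of rational slope. Consequently each edge has a well--defined, strictly positive and finite $\textup{SL}(n,\,\mathbb{Z})$\--length, so that each $\ell_p$ is a strictly positive real number, being an infimum of finitely many positive edge lengths meeting at the vertex $\mu^M(p)$.

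For the maximum, set $L:=\min_{p\in M^{\mathbb{T}^n}}\ell_p$, which is strictly positive by the previous paragraph, and choose any $r_0$ with $0<r_0<\sqrt{L}$. Then for every $r\le r_0$ one has $r^2\le r_0^2<L\le\ell_p$ for all $p$, so $\textup{c}_p(r)=1$ for every fixed point, and hence $\textup{Emb}_{(M,\,\sigma)}(r)=n!\,\chi(M)$; here I also use that the number of fixed points equals $\chi(M)$, as noted in Step 3. For the minimum, set $D:=\max_{p\in M^{\mathbb{T}^n}}\ell_p$, again finite and positive, and take $s_0:=\sqrt{D}$. Then for every $s\ge s_0$ one has $s^2\ge D\ge\ell_p$ for all $p$, so $\textup{c}_p(s)=0$ for every fixed point, giving $\textup{Emb}_{(M,\,\sigma)}(s)=0$; by Theorem \ref{maintheorem} the corresponding embedding space is then homotopy equivalent to an empty disjoint union, and is therefore empty.

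I do not anticipate a genuine obstacle here, since the substantive geometry has already been carried out in Lemma \ref{ss} and encoded in the formula of Proposition \ref{maintheorem2}. The only point requiring care is the finiteness and positivity of the edge lengths, which is exactly where compactness of $M$ (hence of $\Delta^M$) and the Delzant---in particular rationality---property of the polytope enter; these guarantee that the finitely many $\ell_p$ admit a positive uniform lower bound $L$ and a finite uniform upper bound $D$, which is precisely what forces the extremal values to be attained on the intervals $[0,\,r_0]$ and $[s_0,\,\infty)$ of strictly positive length.
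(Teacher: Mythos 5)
Your proof is correct and takes essentially the same route as the paper: the paper's own proof of Lemma \ref{t} is a one-line appeal to Lemma \ref{ss}, Corollary \ref{lastc} and the formula $\textup{Emb}_{(M,\,\sigma)}(r)= n! \sum_{p \in M^{\mathbb{T}^n}} \textup{c}_{p}(r)$ from Proposition \ref{maintheorem2}. Your write-up merely makes explicit the details the paper leaves implicit, namely the finiteness of the vertex set and the positivity and finiteness of the finitely many $\textup{SL}(n,\,\mathbb{Z})$\--edge lengths, which yield the uniform bounds $L$ and $D$ defining $r_0$ and $s_0$.
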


\begin{proof}
It follows easily from Lemma \ref{ss}, Corollary \ref{lastc} and the previous observations.
\end{proof}

This concludes the proof of Theorem \ref{maintheorem} (and hence by construction the proof of Proposition \ref{maintheorem2}).

\section{Remarks on the partially equivariant case of Theorem \ref{maintheorem}}

In this section we initiate a discussion on the topology of the space of \emph{partially
equivariant} symplectic embeddings and sketch some suggestions to answer a question in this 
direction.

First the notion of $\Lambda$\--equivariance ($\Lambda \in \mbox{Aut}(\mathbb{T}^n)$)
in Section 1 has a natural extension: we say that an embedding
from the $2n$\--ball $\mathbb{B}_r$ into the $2n$\--dimensional Delzant manifold $M$ is \emph{$\gamma$\--equivariant with respect to
a monomorphism $\gamma \colon \mathbb{T}^{n-k} \to \mathbb{T}^n$,
$1 \le k \le n-1$}, if the following diagram commutes:
$$
\xymatrix{ \ar @{} [dr] |{\circlearrowleft}
\mathbb{T}^{n-k} \times \mathbb{B}_r  \ar[r]^{\gamma \times f}      \ar[d]^{  {\cdot} }  &  \mathbb{T}^{n} \times M 
                  \ar[d]^{\psi}   \\
                   \mathbb{B}_r  \ar[r]^f   &       M}.
$$
For example, $M^{\gamma}$ is the set of $ {p} \in M$ such that $\psi(\gamma(t), \, p)= {p}$
for all $t \in \mathbb{T}^{n-k}$, and the rest of terminology
is also analogous. This definition extends naturally to the case when $k=n$, in which
the embeddings considered are \emph{purely symplectic}, 
as well as to the case when $k=0$, in which the embeddings are
\emph{fully equivariant}, case which we treated previously in the paper. Unless otherwise
specified we do not consider these two cases in the discussion that follows.
The question we would like to address is the following:

\begin{question} \label{t2}
Let $r$ be such that any connected component $C$ of $M^{\gamma}$ admits a Darboux--Weinstein
neighborhood of radius $r$, and by this we mean a neighborhood that is
equivariantly symplectomorphic to a bundle over $C$ with fiber the standard
ball of radius $r$. 
Is the space of $\gamma$\--equivariant symplectic embeddings from $\mathbb{B}_r$
into $M$ homotopically equivalent to the space of purely symplectic embeddings
from $\mathbb{B}_r^{2k}$ into $M^{\gamma}$ up to reparametrization groups
\textup{(}as explained below\textup{)}?
\end{question}

To analyze Question \ref{t2} first define $\widehat {\mathbb{B}}^{2k}_r$ to be the embedded 
$2k$\--ball in $\mathbb{B}_r$, i.e. the set of points $(z_1,\ldots,z_{k}, \, {0})$ in $\mathbb{B}_r$ so that 
$\sum_{i=1}^{k} |z_i|^2 \le r^2$. The preimage under the momentum map of the $k$\--face 
corresponding to $\gamma$ is the fixed point locus $M^{\gamma}$. 
Now consider any symplectic embedding $f \colon \widehat {\mathbb{B}}^{2k}_r \to M^{\gamma}$. 
We want to find a canonical way to extend $f$ to an equivariant symplectic 
embedding $\mbox{can}(f) \colon \mathbb{B}_r \to M$
up to homotopy. 

Here is an attempt to construct $\mbox{can}(f)$: near the image of $f$, we can apply the equivariant version of the Darboux--Weinstein's
theorem in order to find a neighborhood of $\mbox{Im}(f)$ in $M$ which is symplectomorphic to 
$\mbox{Im}(f) \times \mathbb{B}^{2(n-k)}$, with 
the action of $\mathbb{T}^{n-k}$ given by the standard action 
on $\mathbb{B}^{2(n-k)}$, and the symplectic form coinciding
with the product symplectic form. Note that the symplectic normal bundle to 
$M^\gamma$ is trivial over $\mbox{Im}(f)$ because $\mbox{Im}(f)$ is contractible, 
so a neighborhood of $\mbox{Im}(f)$ looks like $\mbox{Im}(f) \times \mathbb{B}^{2(n-k)}$ 
with a product symplectic form, and the action of $\mathbb{T}^{n-k}$ on 
it is conjugate to the standard one. Using this identification $M$ is described as
a product, and we can define
$\mbox{can}(f)( {z}):=(f(z_1,\ldots,z_{k}),\, z_{k+1}, \ldots , z_n)$. This 
expression for $\mbox{can}(f)$ is clearly symplectic and equivariant with respect to 
$\mathbb{T}^{n-k}$\--actions on the last $n-k$ coordinates but is not canonical because the local
symplectomorphism given by Darboux--Weinstein's theorem is not unique. 
We cannot expect it to always be the same 
independently of $f$, because it is not true 
that globally the normal bundle to $M^\gamma$ is symplectically trivial, it
only becomes true over a neighborhood of $\mbox{Im}(f)$. So this construction
depends on choices of parameters. 

Calling  $\textup{CAN}$ the space of canonical embeddings $\mbox{can}(f) \colon \mathbb{B}_r \to
 \Delta^M$,
where $f \colon \widehat {\mathbb{B}}^{2k}_r  \to M^{\gamma}$ is a symplectic embedding,
observe that $\textup{CAN}$ is naturally identified with the space of purely symplectic 
embeddings from the standard $\mathbb{B}^{2k}_r$ into $M^{\gamma}$, up to homotopy.
The question then becomes whether any $\gamma$\--equivariant symplectic embedding
$f \colon \mathbb{B}^{2k}_r \to M$ may be deformed through a continuous family
of equivariant symplectic embeddings to an embedding in $\textup{CAN}$.

Equivalently, we ask the question: is the natural map between the space of partially equivariant embeddings
from $\mathbb{B}_{r}$ into $M$ and the space of symplectic embeddings from $\mathbb{B}^{2k}_{r}$
into the fixed point set $M^{\gamma}$ (given by the restriction to the fixed ball
$\mathbb{B}_{r}^{2k}$) a fibration? Note that the construction of $\textup{can}(f)$
would give a section of this fibration.

\begin{conjecture}
Question \textup{3.1} has an affirmative answer.
\end{conjecture}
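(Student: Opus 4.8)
The plan is to realize the restriction-to-the-fixed-ball map as a fibration and then analyze its fiber, mirroring the three-step structure of the proof of Theorem \ref{maintheorem}. Write $\mathcal{E}^{\gamma}$ for the space of $\gamma$\--equivariant symplectic embeddings $\mathbb{B}_r \to M$ and $\mathcal{S}$ for the space of purely symplectic embeddings $\widehat{\mathbb{B}}^{2k}_r \to M^{\gamma}$, both with the $\textup{C}^m$\--Whitney topology, and let $R \colon \mathcal{E}^{\gamma} \to \mathcal{S}$ be the restriction $R(f) = f|_{\widehat{\mathbb{B}}^{2k}_r}$. The construction of $\textup{can}$ already furnishes a continuous section of $R$, so the substance of the conjecture is the conjunction of two assertions: (i) that $R$ is a Serre fibration, and (ii) that its fiber has the homotopy type of the appropriate reparametrization group. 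Given these, the fibration sequence $F \to \mathcal{E}^{\gamma} \to \mathcal{S}$ is exactly the asserted equivalence of $\mathcal{E}^{\gamma}$ and $\mathcal{S}$ \emph{up to reparametrization groups}, with the section $\textup{can}$ splitting the associated long exact sequence.

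First I would establish the homotopy lifting property for $R$. The key input is a \emph{parametrized} equivariant Darboux--Weinstein theorem: given a continuous family $f_s \colon \widehat{\mathbb{B}}^{2k}_r \to M^{\gamma}$, $s \in [0,1]^j$, of symplectic embeddings, one produces a continuous family of equivariant symplectomorphisms identifying a neighborhood of $\textup{Im}(f_s)$ with $\textup{Im}(f_s) \times \mathbb{B}^{2(n-k)}$ carrying the product symplectic form and the standard $\mathbb{T}^{n-k}$\--action on the second factor. Such families exist because the symplectic normal bundle of $M^{\gamma}$ restricts to a symplectically trivial bundle over each contractible image $\textup{Im}(f_s)$, and the usual Moser deformation argument underlying Weinstein's theorem can be run in families with continuous (indeed smooth) dependence on the parameter. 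Granting this, the lifting problem reduces to extending a given equivariant embedding along a path in the base by transporting its normal component through the identity in the $\mathbb{B}^{2(n-k)}$\--factor, exactly as in the formula $\textup{can}(f)(z)=(f(z_1,\ldots,z_k),\,z_{k+1},\ldots,z_n)$; this produces the required continuous lift and shows that $R$ is a fibration.

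Next I would identify the fiber. The fiber $F$ over a fixed $f_0 \in \mathcal{S}$ is the space of $\gamma$\--equivariant symplectic extensions of $f_0$; after applying the trivialization of the previous paragraph near $\textup{Im}(f_0)$, such an extension is determined by an equivariant symplectic embedding of the normal $2(n-k)$\--ball commuting with the $\mathbb{T}^{n-k}$\--action and fixing the origin. This is precisely a space of the type studied in Step 2, so by the Alexander-trick deformation retraction of Lemma \ref{2.3} together with Corollary \ref{lastc} applied in the normal directions, $F$ deformation retracts onto a disjoint union of $(n-k)!$ copies of $\mathbb{T}^{n-k}$. This disjoint union of tori is the reparametrization group appearing in the statement, which records the $(n-k)!$ ways of matching the normal weights with the standard ones, in exact analogy with the role played by $\mathcal{R}$ in the fully equivariant case $k=0$.

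The hard part will be assertion (i): proving the parametrized equivariant neighborhood theorem with enough continuity to yield the homotopy lifting property. The difficulty is precisely the one flagged in the discussion preceding the conjecture---the Weinstein identification is not canonical and the normal bundle of $M^{\gamma}$ is only \emph{locally} symplectically trivial---so one must show that the local identifications can be chosen to depend continuously on both the parameter $s$ and the base embedding, without introducing monodromy that would obstruct the construction of a global continuous lift. Concretely, the obstacle is to reduce, continuously over the relevant families, the structure group of the symplectic normal bundle to the group of standard equivariant rotations $\mathcal{R}$ in the normal directions; once this reduction is carried out, the fibration property and the fiber computation follow as above, and combining the fibration $F \to \mathcal{E}^{\gamma} \to \mathcal{S}$ with the section $\textup{can}$ gives the affirmative answer to Question \ref{t2}.
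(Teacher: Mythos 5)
This statement is a \emph{conjecture} in the paper: the author deliberately leaves Question \ref{t2} open, and the discussion preceding it (the construction of $\textup{can}(f)$, the observation that the Darboux--Weinstein identification is non-canonical, and the reformulation as a fibration question) is exactly the extent of what the paper establishes. Your proposal does not close this gap; it reproduces the paper's own reduction and then stops at the same obstruction. Concretely: in your second paragraph you assert that parametrized families of equivariant Darboux--Weinstein identifications exist ``because\ldots the usual Moser deformation argument\ldots can be run in families,'' but in your final paragraph you concede that precisely this assertion --- continuity of the identifications in both the parameter and the base embedding, without monodromy, i.e.\ a continuous reduction of the structure group of the symplectic normal bundle of $M^{\gamma}$ to $\mathcal{R}$ over arbitrary compact families in $\mathcal{S}$ --- is ``the hard part'' and remains to be proven. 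That is not a proof with a deferred technical lemma; it is the conjecture itself restated in fibration language. The paper already flags exactly this issue: the normal bundle of $M^{\gamma}$ is not globally symplectically trivial, triviality holds only over each contractible image $\textup{Im}(f)$, and the resulting choices ``depend on parameters.'' Until one shows these choices can be made coherently over families, the homotopy lifting property for $R$ is unestablished, and the fibration sequence, the splitting by $\textup{can}$, and the long exact sequence argument all have nothing to stand on.

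There is also a secondary gap in your identification of the fiber. Even granting the local product structure near $\textup{Im}(f_0)$, an element of the fiber over $f_0$ is a $\gamma$-equivariant symplectic embedding of the round ball $\mathbb{B}_r$ restricting to $f_0$, and $\mathbb{B}_r$ is not the product $\widehat{\mathbb{B}}^{2k}_r \times \mathbb{B}^{2(n-k)}_r$; moreover Lemma \ref{2.3} and Corollary \ref{lastc} concern symplectomorphisms of a ball equivariant under the \emph{full} torus acting with the origin as its only fixed point, whereas here only $\mathbb{T}^{n-k}$ acts and the fixed locus of the extension problem is $2k$-dimensional. Adapting the Alexander-trick retraction to this partially equivariant, relative setting (deforming the extension while keeping the restriction to $\widehat{\mathbb{B}}^{2k}_r$ pinned to $f_0$) requires a genuinely new argument, not a citation of Step 2. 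So both assertions (i) and (ii) of your plan are open, and the proposal should be regarded as a plausible strategy --- essentially the one the paper itself suggests --- rather than a proof.
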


\begin{example}
\normalfont
If $M=\mathbb{S}^2 \times \mathbb{S}^2$ 
with a product symplectic form and product $\mathbb{T}^2$\--action 
(this space has been carefully studied by Lalonde--Pinsonnault \cite{LP} and Anjos \cite{An} among
other authors), the
fixed point locus of the second $\mathbb{S}^1$ factor is
$\mathbb{S}^2 \times \{a,\, b\}$, 
where $a,b$ are
the fixed points of the action of $\mathbb{S}^1$ on $\mathbb{S}^2$. Now, given a symplectic
embedding $f$ of the ball $\mathbb{B}^2$ into $\mathbb{S}^2$, it is easy to build an $\mathbb{S}^1$\--equivariant
embedding of $\mathbb{B}^4$ into $\mathbb{S}^2 \times \mathbb{S}^2$ 
canonically by $(z_1,\,z_2) \mapsto (f(z_1),\,z_2)$,
where $z_2$ is taken to be a coordinate centered at the fixed point $a$. 
In this case the normal bundle to the fixed point component $\mathbb{S}^2 \times \{a\}$ is globally trivial.
\end{example}

\begin{rm}
\normalfont
The combination of purely symplectic results of Biran, Lalonde--Pinsonnault and others and an affirmative answer to
Question 3.1 would give insight
into the partially equivariant case in higher dimensions; for example
McDuff showed that if $M$ is a symplectic $4$-manifold with non-simple
Seiberg--Witten type, then the space of symplectic embeddings
from $\mathbb{B}_r$ into $M$ is path connected (which extends results of Biran). This is
a consequence of the non--trivial result: \emph{any two cohomologous and deformation equivalent symplectic forms 
on $M$ are isotopic} (proved in \cite{M1}). Examples are known in dimensions
6 and above of cohomologous symplectic forms that are deformation equivalent but not isotopic,
so these techniques do not help to understand the topology
of the space of symplectic embeddings from $\mathbb{B}_r$ into $M$. A positive answer to Question \ref{t2} would give
the first non--trivial result in dimension $6$. 

Another way of trying to
generalize
Theorem \ref{maintheorem} is to consider 
embeddings equivariant with respect to a complexity 
one action, that is, an action of $\mathbb{T}^{n-1}$ 
on $M^{2n}$.  This is a hopeful approach 
since a complete classification of complexity one actions has 
been recently achieved by Karshon and Tolman \cite{KT}.
\end{rm}

\section*{Acknowledgments} The author is grateful to D. Auroux and 
V. Guillemin for discussions, and for hosting  him 
at the M.I.T. regularly during the Fall and Spring semesters of 2003 and 2004.   
He thanks D. Auroux, J.J. Duistermaat, Y. Karshon and M. Pinsonnault for making comments 
on a preliminary version of this paper. Finally, the author is grateful to an anonymous  
referee for helpful suggestions that have shortened the proof in Section 2, as well as for his/her 
interesting comments on Section 3.

\noindent
A. Pelayo\\
Department of Mathematics, University of Michigan\\
2074 East Hall, 530 Church Street, Ann Arbor, MI 48109--1043, USA\\
e\--mail: apelayo@umich.edu

\end{document}